\newcommand{\Reals}[1]{{\rm I\! R}^{#1}}
\newcommand{\mymu}{\ensuremath{\kappa}}
\newcommand{\alambda}{\ensuremath{\lambda}}
\newtheorem{theorem}{Theorem}[section]
\newtheorem{lemma}[theorem]{Lemma}
\newtheorem{remark}[theorem]{Remark}
\newtheorem{corollary}[theorem]{Corollary}
\theoremstyle{definition}
\newtheorem{algorithm}[theorem]{Algorithm}
\numberwithin{equation}{section}
\def\l1{{$\ell_1$}}
\def\trhotg{\wt{\rho}_{TG}}
\def\GEa{\Gamma_E}
\def\GWe{\Gamma_W}
\newcommand{\lp}{\left(}
\newcommand{\rp}{\right)}
\newcommand{\inv}[1]{#1^{-1}}
\newcommand{\wt}[1]{\widetilde{#1}}
\def\b1{\bm{ 1}}
\def\bu{\bm{ u}}
\def\bw{\bm{ w}}
\def\br{\bm{ r}}
\def\bx{\bm{ x}}
\def\bbf{\bm{ f}}
\newcommand{\A}{{\mathcal A}}
\newcommand{\de}{{:=}}
\newcommand{\vek}[1]{\bm{#1}}
\newcommand{\bv}{{\bm{v}}}
\newcommand{\tl}{{\rm TL}}
\title[{Polynomial of best uniform approximation to $x^{-1}$ and two-level methods}]
{Polynomial of best uniform approximation to $x^{-1}$ and smoothing in two-level methods}
\author[J.~K.~Kraus]{Johannes K. Kraus}\address{Johann Radon Institute for
  Computational and Applied Mathematics, Austrian Academy of
  Sciences Altenberger Str. 69, 4040 Linz,
  Austria.}\email{johannes.kraus@oeaw.ac.at}
\author[P.~S.~Vassilevski]{Panayot S. Vassilevski} \address{Center for Applied Scientific
  Computing, Lawrence Livermore National Laboratory, P.O. Box 808,
  L-560, Livermore, CA 94550, USA.} \email{panayot@llnl.gov}
\author[L.~T.~Zikatanov]{Ludmil T. Zikatanov}\address{Department of Mathematics, The
  Pennsylvania State University, University Park, PA 16802, USA.} \email{ltz1@psu.edu}
\date{\today}
\thanks{
The work of the first author has been supported by the Austrian Science Fund, Grant P22989-N18.
The work of the second author is performed under the auspices of the U.S. Department 
of Energy by Lawrence Livermore National Laboratory under Contract 
DE-AC52-07NA27344. The work of the third author is supported in part
by the National Science Foundation, DMS-0810982, U.S. Department of
Energy (LLNL-B595949) and DoE grant DE-FG02-11ER26062/DE-SC0006903.}
\begin{document}

\maketitle

\begin{abstract}
  We derive a three-term recurrence relation for
  computing the polynomial of best approximation in the uniform norm
  to $x^{-1}$ on a finite interval with positive endpoints.  As
  application, we consider two-level methods for scalar elliptic
  partial differential equation (PDE), where the relaxation on the
  fine grid uses the aforementioned polynomial of best
  approximation. Based on a new smoothing property of this polynomial
  smoother that we prove, combined with a proper choice of the coarse
  space, we obtain as a corollary, that the convergence rate of the
  resulting two-level method is uniform with respect to the mesh
  parameters, coarsening ratio and PDE coefficient variation.
 \end{abstract}

\section{Introduction} 
The polynomial of best approximation in uniform norm to $x^{-1}$ on a
finite interval can be found in different forms in many classical
texts on approximation theory, for example,
see~\cite[p.~33,~Equation(4.25)]{1967MeinardusG-aa},
\cite[Exercise~1.20]{Rivlin}. In fact, the approximating polynomial for
$\frac{1}{t-a}$, $a>1$, has already been discovered by Chebyshev in 1887,
see~\cite{2Chebyshev1887}.

As an application, we study two-level methods with smoothers based on
this polynomial of best approximation to $x^{-1}$ on a finite interval
$[\lambda_{\min{}},\lambda_{\max{}}]$,
$0<\lambda_{\min{}}<\lambda_{\max{}}$, in the $\|\cdot\|_{\infty}$
(uniform) norm. We derive several results important for applications:
a three-term recurrence relation for constructing these polynomials;
error estimates; the positivity and monotonicity of the sequence of
polynomials of best approximation, and we use these results in
designing components of two-level methods. We show a major smoothing
property of the polynomial and as a corollary, based on an abstract
two-level estimate we derive two-level (TL or TG) convergence estimates in the
case of discretized elliptic PDE with heterogeneous coefficients. The
estimate explicitly depends on the degree of the polynomial (or on the
range of the spectrum which needs to be resolved by the smoother) and
we prove that if coarse spaces with stability and approximation
properties that are robust with respect coefficient variation are
used, then the two-level methods with polynomial smoothers based on the
polynomial of best approximation to $1/x$ are robust with respect to
the variation in the coefficients of the PDE. Several examples of
coarse spaces that provide the required contrast independent approximation
property are available in the literature, cf., e.g.,
\cite{2010GalvisJ_EfendievY-aa},
\cite{2011ScheichlR_VassilevskiP_ZikatanovL-aa}, and earlier
\cite{1999BrezinaM_HebertonC_MandelJ_VanekP-aa} as modified recently
in \cite{2011BrezinaM_VassilevskiP-aa}).

The paper is organized as follows. In
Section~\ref{sect:three-term-recurrence} we derive a three-term
recurrence relation for the polynomial of best approximation to
$x^{-1}$. Several properties of the sequence of polynomials of best
approximation to $1/x$ are shown in Section~\ref{sect:properties}.  In
Section~\ref{sect:two-level-methods} we discuss and prove the major
smoothing property of the polynomial, which explicitly involves the
polynomial degree and we use it an abstract two-level convergence
result. As a corollary, we derive an estimate for the convergence rate
in case of finite element discretization of scalar elliptic PDE with
coarse spaces that provide contrast independent approximation
resulting in contrast independent two-grid convergence.  This
convergence behavior is illustrated also with numerical tests in
Section~\ref{section:numerical experiments}.

\section{Best polynomial approximation to $x^{-1}$ in uniform
  norm}\label{sect:three-term-recurrence}

We begin with notation and some simple and well known definitions
related to Chebyshev polynomials. We consider a finite interval,
$[\lambda_{\min{}},\lambda_{\max{}}]$, with  
$0<\lambda_{\min{}}<\lambda_{\max{}}<\infty$. 
We denote 
\begin{equation}\label{eqn:define-a}
\kappa=\frac{\lambda_{\max{}}}{\lambda_{\min{}}}, \quad 
\sigma=
\frac{1}{\lambda_{\max{}}-\lambda_{\min{}}}, \quad 
a=\frac{\lambda_{\max{}}+\lambda_{\min{}}}
{\lambda_{\max{}}-\lambda_{\min{}}} 
= \frac{\kappa+1}{\kappa-1}.
\end{equation}
Note that $a>1$ and $\sigma > 0$. The change of variables 
\begin{equation*}
t=
\frac{2}{\lambda_{\max{}}-\lambda_{\min{}}}\left(x-
\frac{\lambda_{\max{}}+\lambda_{\min{}}}{2}\right)=
2\sigma x-a,
\end{equation*}
maps the interval $[\lambda_{\max{}},\lambda_{\min{}}]$ to $[-1,1]$. The inverse map is
\begin{equation*}
x=\frac{1}{2\sigma}(t+a), \quad \mbox{and}\quad  
\frac{1}{x}=\frac{2\sigma}{t+a}.
\end{equation*}
We thus aim to find the polynomial of degree less than or equal to $m$ of best approximation
in the norm $\|\cdot\|_{\infty,[-1,1]}$ of $f(t)=\frac{1}{t+a}$, $a>1$. We note that if $Q_m(t)$
is the polynomial of best approximation to $1/(t+a)$ on $[-1,1]$, and the error of approximation is
\begin{equation*}
E_{[-1,1]}= \min_{Q\in
 \mathcal{P}_m}\left\|\frac{1}{t+a}-Q\right\|_{L_{\infty}[-1,1]},
\end{equation*}
then
 \begin{equation}
  \label{eqn:et}
q_m(x):={2\sigma}Q_m(2\sigma x-a),\quad\mbox{and}\quad
  E=\min_{q\in \mathcal{P}_m}
\left\|\frac{1}{x}-q\right\|_{L_{\infty}[\lambda_{\max{}},\lambda_{\min{}}]}=
   2\sigma E_{[-1,1]}
 \end{equation}
are the polynomial of best approximation in $L^{\infty}$-norm on $[\lambda_{\min{}},\lambda_{\max{}}]$
and the error of approximation, respectively.

We denote the (first kind) Chebyshev polynomial of degree $k$ by $T_k$. For $T_k(\xi)\in \mathcal{P}_k$
we have
\begin{equation*}
T_k(\xi) = 
\frac12\left[(\xi+\sqrt{\xi^2-1})^k+
(\xi+\sqrt{\xi^2-1})^{-k}\right] = 
\frac12\left[(\xi+\sqrt{\xi^2-1})^k+
(\xi-\sqrt{\xi^2-1})^{k}\right].
\end{equation*}
We recall that 
\begin{equation*}
T_{k}(t)=\cos k\arccos(t), \quad t\in [-1,1]
\end{equation*}
and denote 
\begin{equation}\label{eqn:id0}
\delta := a-\sqrt{a^2-1}=
\frac{\sqrt{\kappa}-1}{\sqrt{\kappa}+1}, \quad \eta=-\delta .
\end{equation}
Evidently, $0\le \delta<1$, $\delta^{-1} = a+\sqrt{a^2-1}$, $\eta<0$ and $\delta=|\eta|$.

With this notation in hand, we have the following identities, 
\begin{equation}\label{eqn:id1}
a=-\frac12(\eta+\eta^{-1}), \quad 
\frac{1}{t+a} = \frac{2}{2t-\eta-\eta^{-1}},
\end{equation}
and directly from the expression for $T_k(\xi)$ given above, we also
have
\begin{equation}\label{eqn:id2}
T_k(a) = \tfrac12(-1)^k(\eta^{k}+\eta^{-k}), \quad
T_k(-a) = \tfrac12(\eta^{k}+\eta^{-k}).
\end{equation}

\subsection{Approximation error and three-term recurrence}

Next, in Theorem~\ref{thm:polynomial} we give a representation of the
best polynomial approximation to $\frac{1}{t+a}$ in the $L^{\infty}$-norm
on the interval $[-1,1]$. The proof of this theorem is given in the appendix,
and amounts to showing that the form given in~\eqref{eqn:bpa1x} is equivalent
to the one given in~\cite[p.~33,~Equation~(4.25)]{1967MeinardusG-aa}.
\begin{theorem}\label{thm:polynomial}
  Let $m\ge 1$ be a fixed integer. The polynomial $Q_m\in
  \mathcal{P}_m$, which furnishes the best approximation to
  $\frac{1}{t+a}$ in the $L^{\infty}$-norm on~$[-1,1]$ is
\begin{equation}\label{eqn:bpa1x}
Q_m(t) = \frac{1}{t+a}\left(1-\frac{2 \eta^m}{(\eta-\eta^{-1})^2}R_{m+1}(t)\right),
\end{equation}
where
\begin{equation}\label{eqn:rmplus1}
R_{m+1}(t) = \eta^{-1} T_{m+1}(t)-2 T_m(t)+\eta T_{m-1}(t).
\end{equation}
The error of best approximation is 
\begin{equation*}
E_{[-1,1]} = 
\min_{Q\in \mathcal{P}_m}\left\|\frac{1}{t+a}-Q\right\|_{L_\infty[-1,1]} = 
\frac{\delta^{m}}{a^2-1}.
\end{equation*}
\end{theorem}
\begin{proof} 
We prove this theorem in the appendix by showing how one can
derive~\eqref{eqn:bpa1x} from \cite[p.~33,~Equation~(4.25)]{1967MeinardusG-aa}. 
\end{proof}
The following corollary is immediate and follows after elementary calculations.
\begin{corollary}\label{cor:CGetc} 
Let $E_{m,[\lambda_{\min{}} ,\lambda_{\max{}} ]}$ 
be the error of approximation with polynomial of degree $m$ 
on the interval $[\lambda_{\min{}} ,\lambda_{\max{}} ]$, $0<\lambda_{\min{}} <\lambda_{\max{}} <\infty$. Then
\begin{equation}\label{eqn: error on interval}
E_{m,[\lambda_{\min{}} ,\lambda_{\max{}} ]}=
2\delta^{m-1}
E^2_{0,[\sqrt{\lambda_{\min{}} },\sqrt{\lambda_{\max{}} }]}, 
\end{equation}
where $E_{0,[\sqrt{\lambda_{\min{}} },\sqrt{\lambda_{\max{}} }]}$ is
given by the expression 
\begin{equation*}
E_{0,[\sqrt{\lambda_{\min{}} },\sqrt{\lambda_{\max{}} }]}=
\frac12\left(\frac{1}{\sqrt{\lambda_{\min{}} }}
-\frac{1}{\sqrt{\lambda_{\max{}} }}\right).
\end{equation*}
\end{corollary}
\begin{theorem}\label{thm:CFrec}
  For the polynomials of best approximation to $\frac1x$ given
  in~\eqref{eqn:bpa1x}, the following three-term recurrence relation
  holds:
\begin{equation}\label{eqn:recurrence}
\eta^{-1} Q_{m+2}(t)-2t Q_{m+1}(t)+\eta Q_{m}(t) = -2,\quad m=0,1,\ldots
\end{equation}
with
\begin{equation*}
Q_0(t) = \frac{a}{a^2-1},\qquad
Q_1(t) = \frac{1}{\sqrt{a^2-1}}-\frac{t}{a^2-1}.
\end{equation*}
The error of approximation is:
\begin{equation*}
E_{[-1,1]} = 
\min_{Q\in \mathcal{P}_m}\left\|\frac{1}{t+a}-Q\right\|_{L_\infty[-1,1]} = 
\frac{\delta^{m}}{a^2-1}.
\end{equation*}
\end{theorem}
\begin{proof}
It is immediate to check that for $m=0$,
\begin{equation*}
Q_0(t)=\frac{1}{2}\left(\frac{1}{a-1}+\frac{1}{a+1}\right) =
\frac{1}{2}\left(\frac{2a}{a^2-1}\right)
=\frac{2(\eta+\eta^{-1})}{(\eta-\eta^{-1})^2}. 
\end{equation*}
Setting 
\begin{equation*}
r_m(t)=(\eta-\eta^{-1})^2+q_m(t),\quad\mbox{with}
\quad q_m(t) = 2(-1)^m\eta^{-m}R_{m+1}(t)
\end{equation*} 
we have
\begin{equation*}
Q_m(t) = \frac{r_m(t)}{(t+a)(\eta-\eta^{-1})^2}. 
\end{equation*}
For  $m=1$ we then readily obtain
\begin{eqnarray*}
r_1(t) &=& 
(\eta-\eta^{-1})^2-2\eta^{-1}(\eta(2t^2-1)+2t+\eta^{-1})\\
&=&
\eta^2+\eta^{-2}-2
-4t^2+2-4\eta^{-1}t-2\eta^{-2}\\
&=&\eta^2-4t^2-4\eta^{-1}t-\eta^{-2}=\eta^2-(2t+\eta^{-1})^2\\
& =& 
(\eta-2t-\eta^{-1})(\eta+2t+\eta^{-1})  
=2(\eta-2t-\eta^{-1})(t+a).
\end{eqnarray*}
This shows that $Q_1(t)$ has the form given in the statement of the
theorem.
For $m\ge 2$, using the recurrence relation for $T_m(t)$, it is easy
to check that
\begin{equation*}
R_{m+2}(t)-2tR_{m+1}(t)+R_{m}(t)=0.
\end{equation*}
We then have 
\begin{eqnarray*}
\eta q_{m+1}(t)+2tq_{m}(t) +\eta^{-1} q_{m-1}(t)&=&
2\eta(-1)^{m+1}\eta^{-m-1}R_{m+2}(t)\\
&&~~+4t(-1)^{m}\eta^{-m}R_{m+1}(t)
+2\eta^{-1}(-1)^{m-1}\eta^{-m+1}R_{m}(t)\\
&=&
2(-1)^{m+1}\eta^{-m}(R_{m+2}(t)-2tR_{m+1}(t)+R_{m}(t))=0 . \\
\end{eqnarray*}
On the other hand, for any constant $K$, by the definition of $\eta$ 
we have 
\begin{equation*}
\eta K+2tK+\eta^{-1}K=2(t+a)K. 
\end{equation*}
Hence, after applying the above identities (with $K=(\eta-\eta^{-1})^2$)
we get 
\begin{equation*}
\eta r_{m+1}(t)+2tr_{m}(t) +\eta^{-1} r_{m-1}(t)=
2(t+a)(\eta-\eta^{-1})^2.
\end{equation*}
The proof then is easily completed by using the definition of
$Q_m(t)$.
\end{proof}

The next lemma gives an estimate on $|R_{m+1}(t)|$ by a linear polynomial,
which is used later to derive a sufficient condition for the positivity
of $q_m(\cdot)$.
\begin{lemma}\label{lem:Qmp1_bounds}
The following estimate holds for the polynomial $R_{m+1} (t)$ defined in Theorem~\ref{thm:polynomial}:
\begin{equation}\label{Qmp1_bounds}
-2 (t+a) \le R_{m+1} (t) \le 2 (t+a) , \quad t \in [-1,1].
\end{equation}
\end{lemma}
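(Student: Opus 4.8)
The plan is to establish the two inequalities in \eqref{Qmp1_bounds} by exploiting the explicit form $R_{m+1}(t) = \eta^{-1} T_{m+1}(t)-2 T_m(t)+\eta T_{m-1}(t)$ from Theorem~\ref{thm:CFrec} together with the bound $|T_k(t)|\le 1$ on $[-1,1]$. A naive triangle-inequality estimate gives $|R_{m+1}(t)|\le |\eta|^{-1}+2+|\eta| = \delta^{-1}+2+\delta$ (using $\eta=-\delta$, $0\le\delta<1$), and this is \emph{not} of the required form $2(t+a)$ uniformly, so something sharper is needed. The key observation is that $R_{m+1}$ is, up to the factor $-2\eta^m/(\eta-\eta^{-1})^2$, exactly the remainder term: from the first displayed identity in Theorem~\ref{thm:CFrec} we have
\[
\frac{1}{t+a}-Q_m(t) = \frac{2\eta^m}{(\eta-\eta^{-1})^2}\cdot\frac{R_{m+1}(t)}{t+a},
\]
and the error bound $\|\tfrac{1}{t+a}-Q_m\|_{\infty,[-1,1]} = \delta^m/(a^2-1)$ from the same theorem controls the left-hand side. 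So the first step is to rewrite $R_{m+1}(t) = \tfrac{(\eta-\eta^{-1})^2}{2\eta^m}(t+a)\bigl(\tfrac{1}{t+a}-Q_m(t)\bigr)$ and then take absolute values.

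Carrying this out: for $t\in[-1,1]$ we have $t+a>0$ (since $a>1$), so $|R_{m+1}(t)| = \tfrac{(\eta-\eta^{-1})^2}{2|\eta|^m}(t+a)\,\bigl|\tfrac{1}{t+a}-Q_m(t)\bigr| \le \tfrac{(\eta-\eta^{-1})^2}{2\delta^m}(t+a)\cdot\tfrac{\delta^m}{a^2-1}$, and the powers of $\delta$ cancel leaving
\[
|R_{m+1}(t)| \le \frac{(\eta-\eta^{-1})^2}{2(a^2-1)}(t+a).
\]
It therefore remains to check that $(\eta-\eta^{-1})^2 = 4(a^2-1)$. This is immediate from \eqref{eq:id1}: since $a=-\tfrac12(\eta+\eta^{-1})$ we get $4a^2 = (\eta+\eta^{-1})^2 = (\eta-\eta^{-1})^2+4$, hence $(\eta-\eta^{-1})^2 = 4a^2-4 = 4(a^2-1)$. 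Substituting this gives exactly $|R_{m+1}(t)|\le 2(t+a)$, which is the claimed two-sided bound \eqref{Qmp1_bounds}.

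The only mild subtlety — and the step I would be most careful about — is the edge case $m=0$ (or wherever the formula $R_{m+1}(t)=\eta^{-1}T_{m+1}-2T_m+\eta T_{m-1}$ needs $T_{-1}$): one should either interpret $T_{-1}=T_1$ in the usual way or simply note that the identity $\tfrac{1}{t+a}-Q_m(t) = \tfrac{2\eta^m}{(\eta-\eta^{-1})^2}\tfrac{R_{m+1}(t)}{t+a}$ is an algebraic identity that defines $R_{m+1}$ consistently for all $m\ge 0$, and the error estimate $\delta^m/(a^2-1)$ in Theorem~\ref{thm:CFrec} is stated for all such $m$. Everything else is a one-line substitution, so there is no real obstacle; the content of the lemma is entirely in recognizing that $R_{m+1}$ is the (rescaled) error function and that the scaling constant is precisely $4(a^2-1)$ so that the $\delta^m$ factors annihilate, leaving a bound independent of $m$.
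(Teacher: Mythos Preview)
Your proof is correct, but it follows a genuinely different route from the paper's.

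The paper argues directly: it sets $t=\cos\alpha$ and uses product-to-sum trigonometric identities to show that
\[
R_{m+1}(t)+2(t+a)=2\delta^{-1}\Bigl(\sin\tfrac{m+1}{2}\alpha+\delta\sin\tfrac{m-1}{2}\alpha\Bigr)^2\ge 0,
\]
and similarly that $R_{m+1}(t)-2(t+a)$ is $-2\delta^{-1}$ times a perfect square, hence $\le 0$. This is a self-contained computation that makes no use of the error estimate for $Q_m$.

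Your argument, by contrast, is essentially the observation that the lemma is equivalent to the error bound $E_{[-1,1]}=\delta^m/(a^2-1)$ already recorded in Theorems~\ref{thm:polynomial} and~\ref{thm:CFrec}: once you unwind the closed form $Q_m(t)=\frac{1}{t+a}\bigl(1-\frac{2\eta^m}{(\eta-\eta^{-1})^2}R_{m+1}(t)\bigr)$, the inequality $|R_{m+1}(t)|\le 2(t+a)$ is literally $\bigl|\tfrac{1}{t+a}-Q_m(t)\bigr|\le \delta^m/(a^2-1)$ after the algebraic simplification $(\eta-\eta^{-1})^2=4(a^2-1)$. This is shorter and conceptually cleaner, and it is not circular since the error estimate is imported from the literature in Theorem~\ref{thm:polynomial}. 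What the paper's approach buys is independence from that external result (it would, in fact, give an alternative proof of the error bound), together with explicit factorizations of $R_{m+1}(t)\pm 2(t+a)$ that identify exactly where equality occurs.
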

\begin{proof}
  Recall that by the definition of $\eta$ and $\delta$
  (see~\eqref{eqn:id0}), we have that $\eta<0$, and $|\eta| = \delta$.
  Let $t=\cos\alpha$, for $\alpha\in [0,\pi]$. Then we find that
\begin{eqnarray}\label{Qmp1_lb}
R_{m+1}(t) + 2t-\eta-\eta^{-1}&=&
\eta^{-1}(T_{m+1}(t)-1)-2(T_m(t)-t)+ \eta(T_{m-1}(t)+1) \nonumber \\
&=&
  -2\eta^{-1}\sin^2\frac{m+1}{2}\alpha
+ 4\sin\frac{m+1}{2}\alpha
    \sin\frac{m-1}{2}\alpha \nonumber \\
&&~~- 2\eta\sin^2\frac{m-1}{2}\alpha \nonumber \\
&=&
-2\eta^{-1}\left(
\sin\frac{m+1}{2}\alpha-\eta\sin\frac{m-1}{2}\alpha \right)^2 \nonumber \\
&=&
2\delta^{-1}\left(
\sin\frac{m+1}{2}\alpha+\delta\sin\frac{m-1}{2}\alpha \right)^2 \ge 0.
\end{eqnarray}
In an analogous fashion we obtain
\begin{eqnarray}\label{Qmp1_ub}
R_{m+1}(t) - 2t+\eta+\eta^{-1}
&=& \eta^{-1}(T_{m+1}(t)+1)-2(T_m(t)+t)+ \eta(T_{m-1}(t)+1) \nonumber \\
&=&
  2\eta^{-1}\cos^2\frac{m+1}{2}\alpha
- 4\cos\frac{m+1}{2}\alpha
    \cos\frac{m-1}{2}\alpha \nonumber \\
&&~~ + 2\eta\cos^2\frac{m-1}{2}\alpha \nonumber \\
&=&
2\eta^{-1}\left(
\cos\frac{m+1}{2}\alpha-\eta\cos\frac{m-1}{2}\alpha \right)^2 \nonumber \\
&=&
-2\delta^{-1}\left(
\cos\frac{m+1}{2}\alpha+\delta\cos\frac{m-1}{2}\alpha \right)^2 \le 0. 
\end{eqnarray} 
Combining (\ref{Qmp1_lb}) and (\ref{Qmp1_ub}) and using $2t-\eta-\eta^{-1}=2 (t+a)$
yields the desired result.
\end{proof}

\subsection{Algorithm for finding the polynomial of best uniform approximation to $x^{-1}$\label{sect:algorithm}}
The result in Theorem~\ref{thm:CFrec} gives us the polynomial
approximation on the interval $[\lambda_{\max{}},\lambda_{\min{}}]$. Indeed, the
recurrence relation for $q_{m+1}(x)=2\sigma Q_{m+1}(2\sigma x-a)$ is:
\begin{equation*}
Q_{m+1}(2\sigma x-a) = \eta [-2+2(2\sigma x -a) Q_m(2\sigma x-a) - \eta Q_{m-1}(2\sigma x-a)].
\end{equation*}
Multiplying by $2\sigma$ then gives
\begin{equation*}
q_{m+1}(x) = \eta[-4\sigma + 2\sigma(2\sigma x -a) Q_m(2\sigma x-a)-2\sigma\eta Q_{m-1}(2\sigma x -a)].
\end{equation*}

Based on this identity, we have the following algorithm in which the formulas are
obtained by writing $\eta$, $\sigma$ and $a$ in terms of
$\mu_0=1/\lambda_{\max{}}$ and $\mu_1=1/\lambda_{\min{}}$ and $\delta$
 (defined in~\eqref{eqn:id0}). The reason for choosing these parameters
is because the  constants in the algorithm are symmetric with respect
to $\mu_0$ and $\mu_1$. 

\begin{algorithm}\label{alg:computeBA} 
Set $\mu_0=1/\lambda_{\max{}}$ and $\mu_1=1/\lambda_{\min{}}$.

\begin{enumerate}
\item Calculate the $0$-th order polynomial $q_0$ and the first order
  polynomial $q_1$: 
\begin{equation*}
q_0(x) = \frac12(\mu_0+\mu_1), \quad\mbox{and}\quad 
q_1(x) = \frac12(\sqrt{\mu_0}+\sqrt{\mu_1})^2-\mu_0\mu_1 x.
\end{equation*}
\item For $k=1,\ldots,m-1$, $q_{k+1}$ written as a correction to $q_k$
is computed as follows:
\begin{eqnarray*}
\ell_{k+1}(x) &=& \frac{4\mu_0\mu_1}{(\sqrt{\mu_0}+\sqrt{\mu_1})^2}
[1-q_k(x)\, x] + \delta^2  [q_{k}(x)-q_{k-1}(x)]\\ 
& = & \frac{4\mu_0\mu_1}{(\sqrt{\mu_0}+\sqrt{\mu_1})^2}[1-q_k(x)\, x] + \delta^2 \ell_k(x) , \\
q_{k+1}(x) &=& q_{k}(x)+\ell_{k+1}(x).
\end{eqnarray*}
\end{enumerate}
\end{algorithm}

In other words, we have the relation
\begin{equation}\label{defect correction relation}
q_{k+1}(x)-q_k(x) = \delta^2 (q_k(x)-q_{k-1}(x))+
\frac{4\mu_0\mu_1}{(\sqrt{\mu_0}+\sqrt{\mu_1})^2}\left [ 1 -xq_k(x) \right ].
\end{equation}
This formula can be used to perform stationary iterations towards
solving $A \bu = \bbf$ for a given symmetric and positive definite
matrix $A$ and a given symmetric positive definite preconditioner $D$
to $A$. A standard
stationary iterative method has the form: Given an approximation
$\bv$ to the solution $\bu$ of the linear system in hand, the next
approximation $\bw$ is defined as  
\begin{equation*}
\bw = \bv + R \left (\bbf - A \bv \right ). 
\end{equation*}
A sequence of such approximations, approaching $\bu$ (when the method
is convergent) is obtained by applying this iteration
with $\bw=\bu_{j+1}$, $\bv=\bu_{j}$, $j=0,\ldots$, and, with $\bu_0$,
a given initial guess. 

We now define
\begin{equation*}
R = q_m(D^{-1}A)D^{-1},
\end{equation*}
where $q_m$ is the polynomial of best approximation to $x^{-1}$ on the
interval $\left[ \frac{\lambda}{\kappa},\; \lambda \right ]$ with
$\lambda$ an upper bound for the largest eigenvalue of $D^{-1}A$ and
$\kappa>1$, a parameter controlling the length of the interval.

At every iteration, we need to compute the actions $R \br$, where $\br
= \bbf - A \bv$ is the current residual. This is accomplished by
writing 
equation~\eqref{defect correction  relation} with a matrix argument, namely:
\begin{equation}\label{defect correction relation for matrices}
\begin{array}{rcl}
\ell_k(D^{-1}A)&=&q_{k}(D^{-1}A)-q_{k-1}(D^{-1}A),\\
\ell_{k+1}(D^{-1}A)D^{-1} &=& \delta^2 \ell_k(D^{-1}A)D^{-1}\\
&&\quad +
\frac{4\mu_0\mu_1}{(\sqrt{\mu_0}+\sqrt{\mu_1})^2}D^{-1}\left [ I  -Aq_k(D^{-1}A)D^{-1} \right ].\\
\end{array}
\end{equation}
\begin{algorithm}[Polynomial Preconditioning with $R = q_m(D^{-1}A)D^{-1}$]
\label{algorithm: polynomial preconditioning}
\hfill

Given $\br$, in the following steps the algorithms computes at the end
$ q_m(D^{-1}A)D^{-1} \br$.
\begin{itemize}
\item [(0)]  
Initially, compute ${\overline \br} = D^{-1}\br$. 

\item [(i)] Then,  compute
$\bv_0 = \frac{1}{2}\;(\mu_0+\mu_1) {\overline \br}$ and
$\bv_1 = \frac{1}{2} \left (\sqrt{\mu_0}+\sqrt{\mu_1}\right )^2
{\overline \br} - \mu_0\mu_1 D^{-1}A {\overline \br}$.

\item [(ii)] For
$k=1,2,\;\dots,\;m-1$, compute the current and preconditioned
residuals,
\begin{equation*}
\br_k = \br - A \bv_k, \qquad {\overline \br_k} = D^{-1} \br_k.
\end{equation*} 
The next $\bv_{k+1}$ is computed based on the recurrence formula
\eqref{defect correction relation for matrices}
\begin{equation*}
  \bv_{k+1} = \bv_k +\delta^2(\bv_k-\bv_{k-1})+\frac{4\mu_0\mu_1}{(\sqrt{\mu_0}+\sqrt{\mu_1})^2}{\overline \br}_k. 
\end{equation*}
\item [(iii)]
At the end, we let $R \br = \bv_m$. 
\end{itemize}
\end{algorithm}
The reason to write $q_{k+1}$ as a correction to $q_k$ is to show that such iterations look like
iterations in a defect-correction method: First computing the residual $[1-q_{k}(x)\, x]$, and then
trying to correct it by adding an additional term. One can also easily see that for any initial 
$q_0$ and $q_1$, if the sequence $q_k(x)$ converges, then it converges to $x^{-1}$.
In other words, choosing $q_0$ and $q_1$  different from what they are above, will not generate
the sequence of best approximations to $x^{-1}$, but still this sequence will converge to $x^{-1}$.


\section{Properties of the sequence of polynomials\label{sect:properties}}

To simplify the presentation, we now set $\lambda=\lambda_{\max{}}$
and in this notation we have $\lambda_{\min{}}=\frac{\alambda}{\mymu}$
(recall the definition of $\kappa$ given
in~\S\ref{sect:three-term-recurrence}). We thus consider the best
approximation $q_m(x)$ to $\frac1x$ on the interval
$\left[\frac{\alambda }{\mymu},\alambda \right]$.  We prove several
results on the positivity of the polynomial $q_m(x)$, and the
monotonicity of the sequence $\{q_m\}$ for sufficiently large $m$.

We first note the following identity
\begin{equation}\label{eqn:xqx}
\begin{array}{rcl}
x \, q_{m}(x) &=&2 \sigma x \, Q_m(2 \sigma x - a) =(t+a) \, Q_m (t)\\
 &=& 1 - \frac{2 \eta^{m}}{(\eta-\eta^{-1})^2} R_{m+1} (t)
= 1 - \frac{2 (-1)^m\delta^{m}}{(\delta-\delta^{-1})^2} R_{m+1} (t) , \quad t
\in [-1,1]
\end{array}
\end{equation}
This gives 
\begin{equation}\label{eqn:one-minus-xqx}
1-xq_m(x)=\frac{2 (-1)^m\delta^{m}}{(\delta-\delta^{-1})^2} R_{m+1} (t).
\end{equation} 
The next Lemma shows that $(1-q_m(x) x) > 0$ for all $x \in
\left[0,\frac{\alambda }{\mymu}\right]$.
\begin{lemma}\label{lemma:one-minus-xqx}
Let $q_m(x)$ be the polynomial of degree less than or equal to $m$,
which furnishes the best approximation to $\frac1x$ in the $L^{\infty}$-norm
on the interval $\left[\frac{\alambda }{\mymu},\alambda \right]$, $\mymu > 1$.
Then the following inequality holds:
\begin{equation}\label{eq001}
0 < 1-xq_m(x), \quad \forall x \in\left(0,{\frac{\alambda }{\mymu}}\right]
\end{equation}
\end{lemma}
\begin{proof}
Consider the polynomial
\begin{equation*}
p(x) = 1-xq_m(x).
\end{equation*}
Note that $p(x)$ is of degree at most $(m+1)$. Since we have 
\begin{equation*}
p(x) = x\left(\frac1x-q_m(x)\right),
\end{equation*}
and $x > 0$ in the intervals of interest, we may conclude that the
sign changes in the function $\left(\frac1x-q_m(x)\right)$ are the same as the
sign changes in $p(x)$ for any $x>0$. However, $q_m(x)$ is the
polynomial of best uniform approximation to $\frac1x$, and hence there
are at least $(m+2)$ points of Chebyshev alternance in the interval
$\left[\frac{\alambda }{\mymu},\alambda \right]$. Thus, there exist points
$\{x_k\}_{k=1}^{m+2}$ such that
\begin{equation*}
\frac{\alambda }{\mymu} \le x_1< x_2 <\ldots< x_{m+1}<x_{m+2}\le \alambda ,
\end{equation*}
and also such that 
\begin{equation*}
\left(\frac1{x_k}-q_m(x_k)\right) = -\left(\frac1{x_{k+1}}-q_m(x_{k+1})\right), \quad k=1,\ldots,(m+1).
\end{equation*}
We define now $e:=\left(\frac1{x_1}-q_m(x_1)\right)$, and use the alternation property to get that 
\begin{equation*}
p(x_k)p(x_{k+1})=-x_kx_{k+1}e^2<0, \quad k=1,\ldots,(m+1) .
\end{equation*}
Hence, we may conclude that all the roots of $p(x)$ are disjoint,
and that each of them lies in the \emph{open} interval $(x_k,x_{k+1})$,
$k=1,\ldots,(m+1)$.  We may also conclude that there are no roots of
$p(x)$ outside of the open interval
$\left(\frac{\alambda }{\mymu},\alambda \right)$ and there are
no roots of its first derivative outside this interval. This is so by
the Rolle's theorem: the first derivative $p^\prime(x)$ clearly has
$m$ distinct roots, each lying between the roots of $p(x)$. Hence,
$p(x)$ is either strictly increasing or strictly decreasing on the
interval $\left[0,{\frac{\alambda }{\mymu}}\right]$ and also it cannot have a zero in this
interval.  Recall that $0<\delta = -\eta < 1$ and that
$T_k(-1)=(-1)^k$.  Using the definition of $R_{m+1}(t)$
from Theorem~\ref{thm:polynomial}, and the relation~\eqref{eqn:one-minus-xqx} it follows that
\begin{eqnarray*}
p\left( {\frac{\alambda }{\mymu}} \right)&=&\frac{2 (-1)^m\delta^{m}}{(\delta-\delta^{-1})^2}
R_{m+1}(-1)\\
& = &
\frac{2 (-1)^m\delta^{m}}{(\delta-\delta^{-1})^2}
[(-\delta^{-1})(-1)^{m+1} - 2(-1)^{m}+(-\delta)(-1)^{m-1}]\\
&=& \frac{2\delta^{m}}{(\delta-\delta^{-1})^2}
(\delta^{-1}+\delta - 2) =
\frac{2\delta^{m}}{(\delta+\delta^{-1}+2)}<1=p(0).
\end{eqnarray*}
Here we have used that 
\begin{equation}\label{eqn:delta-delta1}
(\delta-\delta^{-1})^2=
[(\delta^{\frac12}+\delta^{-\frac12})^2
(\delta^{\frac12}+\delta^{-\frac12})^2
= (\delta+\delta^{-1}-2) (\delta+\delta^{-1}+2).
\end{equation}
We thus conclude that $p(0) > p(\frac{\alambda }{\mymu})$ and
therefore $p(x)$ must be decreasing on 
$\left(0,{\frac{\alambda}{\mymu}}\right]$, and this leads to
\begin{equation}\label{eqn:002}
0<
\frac{2\delta^{m}}{(\delta+\delta^{-1}+2)}=
 p\left({\frac{\alambda}{\mymu}}\right)
\le p(x)\le 1 ,
\end{equation}
which concludes the proof.
\end{proof}
The next lemma shows that for $x\in\left[0,{\frac{\alambda }{\mymu}}\right]$ the sequence of
polynomials of best approximation of increasing degree is monotone.
\begin{lemma}\label{lem:monotonicity}
The following estimate holds:
\begin{equation}\label{eq003}
q_m(x)<q_{m+1}(x), \quad \mbox{for all}\quad x \in\left[0,{\frac{\alambda }{\mymu}}\right],
\end{equation}
where $q_k(x)$, $k=m,(m+1)$ is the best polynomial approximation of degree at most $k$ to $\frac1x$
in the $L^{\infty}$-norm on the interval $\left[\frac{\alambda }{\mymu},\alambda \right]$,
$\mymu > 1$.
\end{lemma}
\begin{proof}
The proof  amounts to showing that $\ell_{m+1}(x)>0$ (defined in Step 2. of Algorithm~\ref{alg:computeBA})
for $x\in\left[0,{\frac{\alambda }{\mymu}}\right]$. With the notation
given in Algorithm~\ref{alg:computeBA} for such values of $x$ we have 
$x\leq {\frac{\alambda }{\mymu}}=\mu_1^{-1}$.  Therefore, 
\begin{eqnarray*}
\ell_1(x) &=& q_1(x)-q_0(x) = 
\frac12(\mu_0+\mu_1+2\sqrt{\mu_0\mu_1})-\mu_0\mu_1 x - 
\frac12(\mu_0+\mu_1) \\
& = &
\sqrt{\mu_0\mu_1}(1-x\sqrt{\mu_0\mu_1}) \ge
\sqrt{\mu_0\mu_1}(1-\mu_1^{-1}\sqrt{\mu_0\mu_1}) = 
\frac{\sqrt{\mymu}-1}{\alambda }>0.
\end{eqnarray*}
Further, from Step 2. of Algorithm~\ref{alg:computeBA} and Lemma~\ref{lemma:one-minus-xqx} we have
\begin{eqnarray*}
\ell_{m+1}(x) &=& 
\frac{4\mu_0\mu_1}{(\sqrt{\mu_0}+\sqrt{\mu_1})^2}[1-q_m(x)\, x] +
\delta^2 \ell_m(x) \\
&=&
\frac{4\mymu}{\alambda (1+\sqrt{\mymu})^2}[1-q_m(x)\, x] +\delta^2 \ell_m(x) \\
&\ge&
\frac{8\mymu\delta^m}{\alambda (1+\sqrt{\mymu})^2(\delta+\delta^{-1}+2)}+\delta^2 \ell_m(x) .
\end{eqnarray*}
Noticing that $(\delta+\delta^{-1}+2)=\frac{4\mymu}{\mymu-1}$ then leads to:
\begin{equation}\label{eqn:sm1}
\ell_{m+1}(x) \ge 
\frac{2}{\alambda }\delta^{m+1}  +\delta^2 \ell_m(x).
\end{equation}
Clearly, $\ell_{m+1}>0$ if $\ell_m(x) > 0$ 
and a standard induction argument concludes the proof of the lemma. 
\end{proof}
\begin{remark}
From \eqref{eqn:sm1} one can have sharper bounds below on
$\ell_{m+1}(x)$, but we do not pursue these further here. 
\end{remark}
The next lemma is a straightforward corollary of
Lemma~\ref{lemma:one-minus-xqx}. 
\begin{lemma}\label{lemma:positivity}
  Let $q_m(x)$ be the best polynomial approximation of degree at most
  $m$ to $\frac1x$ in $L^{\infty}$-norm on the interval
  $\left[\frac{\alambda }{\mymu},\alambda \right]$, $\mymu > 1$.
  Suppose that $q_m(x)$ is positive on the interval
  $\left[\frac{\alambda }{\mymu},\alambda \right]$.  Then $q_m(x)$ is
  positive on the whole interval $x\in\left(0,\alambda \right]$.
\end{lemma}
\begin{proof}
  We have already shown in the previous lemma that $q_m(x)>q_0(x)>0$,
  for all $m\ge 1$ and
  $x\in\left[0,{\frac{\alambda }{\mymu}}\right]$. Since, by
  assumption $q_m(x)$ is positive on the interval
  $\left[\frac{\alambda }{\mymu},\alambda \right]$ the proof is
  complete.
\end{proof}
In the two-level method convergence estimates in the next section, we
will use the following result (which also includes a sufficient
condition for the positivity of $q_m(x)$).
\begin{lemma}\label{prop:sufficient-for-positivity}
Assume that $\mymu$ and $m$ are such that 
the following inequality holds:
\begin{equation}\label{Pos_Cond_mu}
\left(\frac{\sqrt{\mymu}-1}{ \sqrt{\mymu}+1}\right)^m \leq
\frac{\omega}{\mymu-1}, \quad\mbox{for some}\quad \omega\in (0,2).
\end{equation}
Then the following inequality holds for 
for all $x \in (0,\alambda ]$:
\begin{equation}\label{eqn:bound-on-q}
\frac12\min\left\{\frac{\mymu+1}{\alambda},\frac{2-\omega}{x}\right\}
\leq q_m(x) \leq \frac1x\left(1+ \frac{\omega}{2}\right). 
\end{equation} 
\end{lemma}
\begin{proof}
  \textsf{Lower bound:} We prove first the lower bound when
  $x\in\left[\frac{\alambda }{\mymu},\alambda \right]$.   
Let
  $R_{m+1}(t)$ be the polynomial that has been defined in
  Theorem~\ref{thm:polynomial}.  We use the relation~\eqref{eqn:xqx}
  and Lemma~\ref{lem:Qmp1_bounds}. Note that $-1\le t\le 1$ for   $x\in\left[\frac{\alambda }{\mymu},\alambda \right]$, and we estimate below $xq_m(x)$ as
  follows
\begin{eqnarray*}
xq_m(x) &=& 1 - \frac{2 (-1)^m \delta^m}{(\delta-\delta^{-1})^2} R_{m+1}(t) \ge
1 - \frac{2  \delta^m}{(\delta-\delta^{-1})^2} \vert R_{m+1}(t) \vert \nonumber \\
&\ge& 1 - \frac{2  \delta^m}{(\delta-\delta^{-1})^2} \left( 2 t + \delta + \delta^{-1} \right) \nonumber \\
&\ge& 1 - \frac{2  \delta^m}{(\delta-\delta^{-1})^2} \left( 2 + \delta + \delta^{-1} \right) \nonumber \\
&=& 1 - \frac{2  \delta^m}{\delta + \delta^{-1} -2}= 
1 -\delta^m\frac{\kappa-1}{2} \ge  \frac{2-\omega}{2}.
\end{eqnarray*}
In the last two steps we have used the identity~\eqref{eqn:delta-delta1} and 
the definition of $\delta$, given in~\eqref{eqn:id0}. We thus have
shown that
$q_m(x) \ge \frac{2-\omega}{2x}$ for all
$x\in\left[\frac{\alambda}{\mymu},\alambda \right]$.  Next, 
we apply 
Lemma~\ref{lem:monotonicity} and we have that
\[
q_m(x) \ge q_0(x)  = \frac{\mymu+1}{2\alambda},
\quad \mbox{for}\quad x\in\left[0,\frac{\alambda}{\mymu}\right],
\]
which concludes the proof of the lower bound. 

\textsf{Upper bound:} To prove the upper bound, we need to consider
only the case 
$x\in \left[\frac{\alambda}{\mymu},\alambda\right]$, because  
from Lemma~\ref{lemma:one-minus-xqx} we already know that $xq_m(x) < 1$
for $x\in \left[0,\frac{\alambda}{\mymu}\right]$. 
For $x\in \left[\frac{\alambda}{\mymu},\alambda\right]$, 
we apply an argument analogous to the one for the lower
bound using the relation~\eqref{eqn:xqx} and Lemma~\ref{lem:Qmp1_bounds} (just changing ``$-$'' to ``$+$''): 
\begin{eqnarray*}
xq_m(x) &=& 1 - \frac{2 (-1)^m \delta^m}{(\delta-\delta^{-1})^2}
R_{m+1}(t) 
\leq 
1 +\delta^m\frac{\kappa-1}{2} \le  1+\frac{\omega}{2}.
\end{eqnarray*}
\end{proof}
\begin{remark}
Note that this lemma implies that the polynomial of best approximation
is positive on $[0,\lambda]$ as long as~\eqref{Pos_Cond_mu} is
satisfied with $\omega\in (0,2)$. 
\end{remark}
To conclude this section, we discuss conditions relating $\mymu$ and
the degree of the polynomial $m$ so that \eqref{Pos_Cond_mu} holds.
In what follows, without loss of generality we assume that
$\ln((\mymu-1)/\omega)>1$. In applications (particularly for
analysis of convergence of two-level methods) we are interested in
large values of $\mymu$ (resp. $m$).  Since $\omega\in (0,2)$, such
condition is clearly satisfied for $\mymu>2e+1$.

For fixed and sufficiently large $\mymu$, (as we  assumed above), let $m$ satisfy
\begin{equation}\label{degree}
\frac{\sqrt{\mymu}+1}{2} \ln [(\mymu-1)/\omega]\le m \le 
1+ \frac{\sqrt{\mymu}+1}{2} \ln [(\mymu-1)/\omega].
\end{equation}
We will now show that the lower bound in~\eqref{degree}
implies~\eqref{Pos_Cond_mu} (and therefore also the conclusion of Lemma~\ref{prop:sufficient-for-positivity}). 
Since $0< \delta < 1$ we have
\begin{equation*}
\delta^m =
\left(1-\frac{2}{\sqrt{\mymu}+1}\right)^m \le  
\left[\left(1-\frac{2}{\sqrt{\kappa}+1}\right)^{\sqrt{\kappa}+1}\right]^{\frac12\ln[(\mymu-1)/\omega]}
\end{equation*}
On the other hand,  the function
$(1-2/\xi)^\xi$ is increasing for all $\xi > 2$, and hence
\begin{equation*}
\delta^m <
\left[\lim_{\xi\to\infty}\left(1-\frac{2}{\xi}\right)^\xi
\right]^{\frac12\ln[(\mymu-1)/\omega]}
=
\exp\left(-\ln\frac{\mymu-1}{\omega}\right) = \frac{\omega}{\mymu-1}.
\end{equation*}
Thus, if $\kappa$ is given, the polynomial degree $m$ for
which~\eqref{Pos_Cond_mu} holds is bounded below by the right hand
side of~\eqref{degree}.

In addition, it is easy to show that if~\eqref{degree} holds, then we
also have
\begin{equation}\label{kappa-and-m}
 \frac{1}{\mymu+1}\le c_\omega \left(\frac{\ln
     m}{m}\right)^2,\quad\mbox{with}\quad c_\omega=
\frac12\sup_{\mymu>1;\omega\in(0,2)}\left(\frac{\ln(\mymu/2)}{1+\ln (\mymu/\omega)}\right)^2.
\end{equation}
Note that $c_\omega$ is finite. 
The inequality~\eqref{kappa-and-m}  is seen as follows. Since the logarithm is an increasing function
on its domain we get
\[
\ln m = \ln[\sqrt{\mymu}+1)/2] + \ln\ln[(\mymu-1)/\omega]\ge \frac12\ln(\mymu/2).
\]
Also, from~\eqref{degree}, since $\sqrt{\mymu}+1\ge 2$ we have:
\begin{eqnarray*}
m^2&\le& 
\left(1+ \frac{\sqrt{\mymu}+1}{2} \ln [(\mymu-1)/\omega]\right)^2\le 
\left(\frac{\sqrt{\mymu}+1}{2} (1+\ln [(\mymu-1)/\omega]\right)^2\\
&\le&  \frac{1}{2}(\mymu+1) (1+\ln \mymu/\omega)^2.
\end{eqnarray*}
Combining the last two estimates then gives \eqref{kappa-and-m}. 

\section{An application to two-level
  methods}\label{sect:two-level-methods} 
We consider the linear system of equations 
\begin{equation}
\label{eqn:linear-system} 
A\vek{u}=\vek{f},
\end{equation}
where $A\in \Reals{N\times N}$ is a symmetric and positive definite
matrix, and $\vek{f}\in \Reals{N}$ is a given right hand side vector.
To describe a general two-level multiplicative method,
we denote $\bm{V}=\Reals{N}$, and also introduce a coarse space $V_H$,
$\bm{V}_H\subset V$, $N_H=\operatorname{dim}V_H$, $N_H < N$. In the
following we will always assume that $\bm{V}_H = \operatorname{range}(P)$,
where $P:\Reals{N_H}\mapsto \bm{V}$ and its matrix representation in the
canonical basis of $\Reals{N_H}$ is given by the coefficients in the
expansion of the basis in $\bm{V}_H$ via the basis in $\bm{V}$. Clearly, 
$P$ is a full rank operator and its matrix representation is oftentimes called
\emph{prolongation} or \emph{interpolation} matrix. The restriction of
$A$ on  the coarse space is denoted by $A_H=P^TAP$. 

\subsection{Convergence rate estimates\label{sect:convergence}}
In this subsection we prove convergence estimates for the classical
multiplicative two-level iteration, with polynomial smoother which is
used to define a preconditioner $B \approx A^{-1}$.  In a recent work
\cite{BVV11} the properties of special polynomial smoothers have been
exploited in order to conduct an improved convergence analysis of
smoothed aggregation algebraic multigrid methods. Here, only for
completeness, we include a two-level convergence result presented in
\cite{2011BrezinaM_VassilevskiP-aa}. The only difference is that we
use a polynomial smoother with polynomial defined via
Algorithm~\ref{algorithm: polynomial preconditioning}. 
As in
\cite{2011BrezinaM_VassilevskiP-aa} we show explicit dependence of the
estimates on the degree of the polynomial.

The results up to and including
Theorem~\ref{theorem:abstract-estimate} hold for general SPD $A$,
$\vek{V}$ and $\vek{V}_H$, provided that the smoother is constructed
using the polynomials of best approximation to $1/x$ on a suitably
chosen interval. 

In this subsection, by $\rho(X)$ we denote the spectral radius of a
matrix $X$. If, in addition, $X$ is symmetric and positive definite
matrix, we denote the $X$-norm by $\|\vek{v}\|^2_X = \vek{v}^T X
\vek{v}$.

We define the two-grid (or TG) preconditioner using a classical two-level
algorithm which reads as follows. 
\begin{algorithm}\label{alg:two-level}
Given $\vek{w}\in \vek{V}$ which approximates the solution
of~\eqref{eqn:linear-system} we define the next approximation  $\vek{v}\in \vek{V}$
 to $\vek{u}$ via the following two steps:
\begin{enumerate}
\item Coarse grid correction: $\vek{y} \de \vek{w}  + PA_H^{-1}P^T(\vek{f}-A\vek{w})$
\item Smoothing: $\vek{v} \de \vek{y}+R(\vek{f}-A\vek{y})$.
\end{enumerate}
\end{algorithm}

We assume that $R$ is symmetric and positive definite and $A$-norm
convergent, namely 
\begin{equation}\label{convergent-smoother}
\|I-RA\|_A^2 < 1.
\end{equation}
The error propagation operator
for the two-level iteration above is
\[
E_{\tl} = (I-RA)(I-\pi_A), \quad \pi_A = PA_H^{-1} P^T A.  
\]
We then define the two-level preconditioner as:  
\[
B=(I-E_{\tl}E_{\tl}^{*})A^{-1}.
\]
Here $E_{\tl}^*$ denotes the adjoint with respect to the inner product
defined by $A$.  Introducing  $\bar{R}$ such that  
\begin{equation}\label{Rbar} 
\left( I - \bar{R} A \right) = \left( I - R A \right)^2 
\quad\mbox{and hence}\quad \bar{R} = 2R - R A R .  
\end{equation} 
it is straightforward then to compute that (see, e.g.,
\cite{Panayotsbook}):
\begin{equation}\label{eqn:B}
B = \bar{R} + \left( I - R A \right) P  A^{-1}_H P^T \left( I - A R \right).
\end{equation}
Recall a necessary and sufficient condition for
$R$ to be a convergent smoother in $A$-norm, i.e., \eqref{convergent-smoother} to
hold is that $\bar{R}$ is SPD. 

Our goal will be to prove a convergence rate estimate for the two-level
method with polynomial smoother.  First, let us denote with $D$ the
diagonal of $A$ and set 
\[
R=q_m(D^{-1}A)D^{-1}, 
\] 
where $q_m(x)$ is the polynomial of best approximation to $1/x$,
generated by the Algorithm~\ref{alg:computeBA} on a fixed interval
$[\alambda/\mymu,\alambda]$. Both $\alambda$ and $\mymu$ are to be specified
later. 

One may also write $R$ in the form
\begin{equation}\label{eq:R-and-q}
R = D^{-1/2}q_m(\widehat{A}) D^{-1/2},\qquad\widehat{A}=D^{-1/2}AD^{-1/2}. 
\end{equation}
Using the notation from Section~\ref{sect:properties}, we set
$\alambda=\|\widehat{A}\|_{\ell_\infty}$. 
In what follows, we hold $\alambda$ fixed and we vary $\kappa$
and the degree of the polynomial $m$. However, $\kappa$ and $m$ do not
vary independently and we assume that
$\kappa$ and $m$ satisfy the condition~\eqref{Pos_Cond_mu}. 
With such choice of $\alambda$, $\kappa$ and $m$,  one can easily show that $R$
is a contraction (a convergent smoother) in $A$-norm and we do so by
showing that $\bar{R}$ is SPD, which, as we mentioned earlier,  
is both necessary and sufficient condition
for~\eqref{convergent-smoother} to hold. Clearly,  $\bar{R}$ can be written (see \eqref{Rbar})
as 
\[
\bar{R} = D^{-1/2}[2q_m(\widehat{A}) - q^2_m(\widehat{A})\widehat{A}]D^{-1/2}.
\]
From the upper bound in Lemma~\ref{prop:sufficient-for-positivity} we
immediately get that for all $x\in (0,\lambda]$ we have $xq_m(x)\le
\frac{2+\omega}{2}$. Therefore, for all $\vek{w}\in \vek{V}$ we get
\begin{eqnarray*}
&&\vek{w}^T (2q_m(\widehat{A}) - [q_m(\widehat{A})]^2\widehat{A})\vek{w}
\ge (2-\|xq(x)\|_{\infty,(0,\alambda)})\vek{w}^Tq_m(\widehat{A})\vek{w} \\
&&\ge 
\frac{2-\omega}{2}\vek{w}^Tq_m(\widehat{A})\vek{w}.
\end{eqnarray*}
Applying the inequality above with $\vek{w}=D^{-1/2}\vek{y}$ then
shows that for all $\vek{y}\in \vek{V}$
\begin{equation}\label{eqn:equivalence-1}
\vek{y}^T \bar{R}\vek{y}\ge 
\frac{2-\omega}{2}\vek{y}^T 
R\vek{y}\ge \frac{2-\omega}{2}\min_{x\in (0,\lambda]}q_m(x)\;
\vek{y}^TD^{-1}\vek{y}.  
\end{equation}
From the lower bound in Lemma~\ref{prop:sufficient-for-positivity}, we conclude that $\bar{R}$
is SPD.

We further note that each of
the off-diagonal entries of $(D^{-1/2}AD^{-1/2})$ is less than 1 and
the diagonal entry is equal to 1. Therefore, we have that
\begin{equation}\label{n_z is nnz per row of A}
1\le \|D^{-1/2}AD^{-1/2}\| = \rho(D^{-1/2}AD^{-1/2})\le
\|D^{-1/2}AD^{-1/2}\|_{\ell_\infty} =\alambda \le n_z,
\end{equation}
where $n_z$ is the maximal number of non-zeros in a row of $A$.  

The convergence rate estimates are
derived from the following theorem (two-level version of the XZ-identity, 
cf.~\cite{FVZ2005,Panayotsbook}). 
\begin{theorem}\label{XZ} Assume that $\bar{R}$ is SPD. Then the following
  identity holds:
\begin{equation}\label{eqn:XZ_TL_id}
\vek{v}^T B^{-1} \vek{v} = 
\inf_{\vek{v}_H\in \vek{V}_H} [ \|\vek{v}_H\|_A^2 + \|\vek{v}-\vek{v}_H\|^2_{\bar{R}^{-1}}].
\end{equation}
\end{theorem}
Based on Theorem~\ref{XZ}, we now state and prove a convergence result involving the polynomial smoother. 
\begin{theorem}\label{theorem:abstract-estimate}
Let $A$ be a symmetric positive definite matrix and
  $D$ be its diagonal. Let
  $\alambda=\|D^{-1/2}AD^{-1/2}\|_{\ell_\infty}$, and also  
  $\kappa>1$ and $m$ satisfy~\eqref{Pos_Cond_mu}. If $R=q_m(D^{-1}A)D^{-1}$, with
  $q_m(x)$ the polynomial of best approximation to $1/x$ on the
  interval $[\alambda/\mymu,\alambda]$, then the following
  estimate holds for all $\vek{v}\in \vek{V}$:
\begin{equation}\label{eqn:abstract-estimate}
\vek{v}^T B^{-1} \vek{v} \le 
\frac{4}{(2-\omega)}
\inf_{\vek{v_H}\in \vek{V_H}}
\left[\|\vek{v}_H\|_A^2 +
\frac{\alambda}{(\mymu+1)}
\|\vek{v}-\vek{v}_H\|_{D}^2 + 
\frac{1}{2-\omega}\|\vek{v}-\vek{v}_H\|^2_{A}\right]\;.
\end{equation}
\end{theorem}
\begin{proof}
First, we see that from \eqref{eqn:equivalence-1} we have that 
\begin{equation}\label{eqn:equivalence-2}
\vek{y}^T \bar{R}\vek{y}\ge 
\frac{2-\omega}{2}\vek{y}^T R\vek{y}\quad\mbox{and hence}\quad
\vek{y}^T \bar{R}^{-1}\vek{y}\le \frac{2}{2-\omega}\vek{y}^T R^{-1}\vek{y}.
\end{equation}
Under the assumptions we made in the statement of the theorem
we can apply Lemma~\ref{prop:sufficient-for-positivity},  and get that for all $x\in (0,\lambda]$, 
\begin{equation}\label{eqn:bound-on-q-1}
\frac{1}{q_m(x)}
\leq 2\max
\left\{\frac{\alambda}{\mymu+1},\frac{x}{2-\omega}\right\}
\leq 
\left(\frac{2\alambda}{\mymu+1}+\frac{2x}{2-\omega}\right). 
\end{equation} 
Since $\widehat{A}$ and $q_m(\widehat{A})$ commute, and have the same
set of orthonormal eigenvectors, we have that  for all $\vek{w}\in \vek{V}$ we have
\begin{equation*}
\vek{w}^T [q_m(\widehat{A})]^{-1}\vek{w}
\leq 
\frac{2\alambda}{\mymu+1}\|\vek{w}\|_{\ell_2}^2 + \frac{2}{2-\omega}\|\vek{w}\|^2_{\widehat{A}}.
\end{equation*}
Taking $\vek{y}=D^{1/2}\vek{w}$ in the inequality above and using the
estimate given in~\eqref{eqn:equivalence-2}
\begin{equation}\label{polynomial smoothing property}
\vek{y}^T \bar{R}^{-1}\vek{y}\le \frac{2}{2-\omega}
\vek{y}^T R^{-1}\vek{y}
\leq 
\frac{4}{(2-\omega)}\left[\frac{\alambda}{(\mymu+1)}
\|\vek{y}\|_{D}^2 + \frac{1}{2-\omega}
\|\vek{y}\|^2_{A}\right].
\end{equation}
The proof is concluded by taking $\vek{y}=(\vek{v}-\vek{v}_H)$ and
applying~Theorem~\ref{XZ}.
\end{proof}
Without loss of generality, we set now $\omega=1$  and use that 
in equation~\eqref{kappa-and-m}
$c_\omega=c_1\le \frac12$.  
The estimate in
the Theorem~\ref{theorem:abstract-estimate} takes the form. 
\begin{corollary}\label{coro:abstract-estimate} 
Under the assumptions of
  Theorem~\ref{theorem:abstract-estimate}, with $\omega=1$ we have
\begin{equation}\label{eqn:abstract-estimate-1}
\vek{v}^T B^{-1} \vek{v} \le 
4\inf_{\vek{v_H}\in \vek{V_H}}
\left[\|\vek{v}_H\|_A^2 +
\frac{\alambda}{(\mymu+1)}
\|\vek{v}-\vek{v}_H\|_{D}^2 + \|\vek{v}-\vek{v}_H\|^2_{A}\right]\;.
\end{equation}
In addition, if $\mymu$ and $m$ satisfy~\eqref{degree}
we have 
\begin{equation}\label{eqn:abstract-estimate-1}
\vek{v}^T B^{-1} \vek{v} \le 
2\inf_{\vek{v_H}\in \vek{V_H}}
\left[\|\vek{v}_H\|_A^2 +
\frac{\alambda\ln^2m}{m^2}
\|\vek{v}-\vek{v}_H\|_{D}^2 + 2\|\vek{v}-\vek{v}_H\|^2_{A}\right].
\end{equation}
\end{corollary}

To stress the fact that estimate \eqref{polynomial smoothing property} is purely algebraic, we formulate it
separately, as this is our main new result.

\begin{theorem}\label{theorem: polynomial smoothing property}
Let $A$ be an s.p.d. matrix and $D$ a given s.p.d. preconditioner for $A$ such that
$\|D^{-\frac{1}{2}} A D^{-\frac{1}{2}}\| \le \lambda$. Consider the polynomial preconditioner
\begin{equation*}
R = q_m(D^{-1}A) D^{-1},
\end{equation*}
where $q_m$ is the polynomial of best approximation of $1/x$ over the interval $\left [\frac{\lambda}{\kappa},\; 
\lambda \right ]$. The parameter $\kappa$ is chosen depending on $m$ such that \eqref{Pos_Cond_mu} holds for a given
$\omega \in (0,2)$.
Then the following smoothing property holds for $R$ and its symmetrized version ${\overline R}$ (see \eqref{Rbar}):
\begin{equation*}
\frac{2-\omega}{2}\;\bv^T {\overline R}^{-1} \bv \le \bv^T R^{-1} \bv
\le \frac{2\lambda}{\kappa+1}\;\bv^TD\bv+ \frac{2}{2-\omega}\;\bv^T A\bv.
\end{equation*}
In addition, if $\kappa$ and $m$ satisfy \eqref{degree}, we have
\begin{equation*}
\frac{2-\omega}{2}\;\bv^T {\overline R}^{-1} \bv \le \bv^T R^{-1} \bv
\le 2\lambda\;\frac{\ln^2 m}{m^2}\; \bv^TD\bv+ \frac{2}{2-\omega}\;\bv^T A\bv.
\end{equation*}
\end{theorem}

\subsection{Two-level method for discretized PDE\label{two-level-pde}}

In this section we apply the abstract two--level result to the case of
a two-level iterative method with large coarsening ratio for the
solution of a system of linear algebraic equations arising from a
discretization of scalar elliptic equation with heterogeneous
coefficients similarly to the presentation in
\cite{2011BrezinaM_VassilevskiP-aa}, now for the case of a different
polynomial smoother from Theorem~\ref{theorem: polynomial smoothing
  property}.  We consider the following variational problem: Find $u\in
H^1_D(\Omega)$, for a given polygonal (polyhedral) domain $\Omega
\subset \Reals{d}\ (d=2\text{ or } 3)$ and a source term $f \in
L_2(\Omega)$, such that
\begin{equation}\label{eqn:model-problem-with-jumps}
\begin{array}{rcl} a(u,\;v) \equiv 
\int_{\Omega} \alpha(\bm{x}) \;\nabla u \cdot \nabla v  &=& 
\int_\Omega f(\bm{x})v(\bm{x})  = (f,\;v)\;,  
\quad\mbox{for all}\quad v\in H_D^1(\Omega).
\end{array}
\end{equation} 
Here, $\Omega\subset \Reals{d}$ $d=2,3$ is a given domain whose
boundary $\Gamma=\partial\Omega$ is partitioned as
$\Gamma=\Gamma_D\cup\Gamma_N$. We assume that $\Gamma_D\neq\emptyset$
is closed as a subset of $\Gamma$ and also has a nonzero $(d-1)$
dimensional measure. We refer to $\Gamma_D$ as the Dirichlet part of the
boundary and $\Gamma_N$ as the Neumann part of the boundary.  In the
variational problem~(\ref{eqn:model-problem-with-jumps}), $H^1_D(\Omega)$
denotes the space of functions in $H^1(\Omega)$ whose traces
vanish on $\Gamma_D$.  

We are interested in the case when the diffusion
coefficient $\alpha = \alpha(\bm{x})$ is a piecewise constant
function, that may have large variations within $\Omega$. We thus
assume that $\bar\Omega=\cup_{l=1}^{m_0} \bar{\mathcal{Y}}_l$, with
polygonal (polyhedral) subdomains $\mathcal{Y}_l$, and that
$\alpha(\bm{x})=\alpha_l$, for all $\bm{x} \in \mathcal{Y}_l$ and
$l=1,\ldots, m_0$.  We introduce the following energy norm
\begin{equation}\label{eqn:weighted-l2-norm-definition}
\|v\|_{a}^2 = \int_{\Omega} \alpha(\bm{x})|\nabla v|^2 =
\sum_{l=1}^{m_0}\alpha_l\int_{\mathcal{Y}_l}|\nabla v|^2.
\end{equation}
We  also need the weighted $L_2$ norm
\begin{equation}\label{eqn:l2-norm-definition}
\|v\|_{0,\alpha}^2 = \int_{\Omega}\alpha(\bm{x}) v^2 =
\sum_{l=1}^{m_0}\alpha_l\int_{\mathcal{Y}_l}v^2.
\end{equation}
We consider a standard discretization of the variational
problem~\eqref{eqn:model-problem-with-jumps} with piecewise linear
continuous finite elements.  To define the finite element spaces and
the approximate solution, we assume that we have a locally quasi--uniform, 
simplicial triangulation $\mathcal{T}_h$ of $\Omega$. We assume that this 
triangulation also resolves $\mathcal{Y}_l$, namely, for $l=1,\ldots,m_0$ we have:
\begin{equation}\label{eqn:alignment-property} 
\bar{\Omega} = \cup_{\tau\in \mathcal{T}_h} \tau, \quad \bar{\mathcal{Y}}_l =
\cup_{\tau\in \mathcal{T}_{Y,l}} \tau,
\end{equation}
where $\mathcal{T}_{Y,l} \subset \mathcal{T}_h$, for $l=1,\ldots,m_0$.
The standard space of piecewise linear (w.r.t $\mathcal{T}_h$) and
continuous functions  vanishing on the boundary of $\Omega$ is denoted by $V_h$.  

The  discrete problem then reads: Find $u\in V_{h}$ such that 
\begin{equation}\label{eqn:discrete}
a(u,v)=(f,v), \quad \mbox{for all} \quad v\in V_{h}.
\end{equation}
The notation and  constructions in the previous section are suitable
for the finite element setting as well. Indeed, a coarse space 
corresponding to $\vek{V}_H$ (denoted here with $V_H$) as
$V_H = \operatorname{range}(P)$, with the same $P$ as before, but this
time representing the coefficients in the expansion of the basis in
$V_H$, $\{\varphi_j^H\}_{j=1}^{N_H}$ via the canonical Lagrange
basis $\{\varphi_j\}_{j=1}^{N}$ in $V_h$.  
Evaluating the bilinear form on the basis for $V_h$ and the basis for
$V_H$ defines the stiffness matrix $A$ and the matrix $A_H$:
\begin{eqnarray*}
A_{kj}=a(\varphi_j,\varphi_k), \quad (A_H)_{kj} = (P^TAP)_{jk} =a(\varphi^H_j,\varphi^H_k). 
\end{eqnarray*}
According to the considerations in the previous section, we use bold
face to represent vectors of degrees of freedom and normal font for
functions. Thus a function $v\in V_h$ is represented by the vector
$\vek{v}\in \vek{V}$.

We make the following assumption for the stability and approximation
properties of the coarse function space $V_H$.
\begin{itemize}
\item \textsf{Approximation and stability assumption:} 
For any $v\in V_h$ there exists $v_H\in V_H$ such that
\begin{equation}\label{eqn:approx}
H^{-2}\|v-v_H\|^2_{0,\alpha} +\|v-v_H\|^2_{a} \le c_{as} \|v\|^2_a, 
\end{equation}
where $H$ is the diameter of the support of a typical basis function
in $V_H$, and the constant $c_{as}$ is independent of the variations
of the coefficient $\alpha(\vek{x})$.
\end{itemize}

Construction of coarse spaces satisfying this assumption is possible as already mentioned, 
and we refer
to \cite{2010GalvisJ_EfendievY-aa},
\cite{2011ScheichlR_VassilevskiP_ZikatanovL-aa}, and earlier \cite{1999BrezinaM_HebertonC_MandelJ_VanekP-aa}
 as modified recently in \cite{2011BrezinaM_VassilevskiP-aa}
for such constructions. 

We next introduce a well-known inequality relating the weighted $L^2$
norm on the function space $V_h$ and the norm provided by the diagonal of
the stiffness matrix on the space of degrees of freedom (nodal values of
the piece-wise linear functions).  Let $\{\lambda_{j,T}\}_{j=1}^{d+1}$
  be the barycentric coordinates in an element $T\in \mathcal{T}_h$ and
  $\alpha_T$ be the value of the coefficient on $T$ (recall that
  $\alpha(\bm{x})$ is piece-wise constant).  Let $v\in V_h$
  with corresponding vector of degrees of freedom $\vek{v}\in
  \vek{V}$. We have the following simple inequality
\begin{eqnarray*}
\|\vek{v}\|_D^2 &=& \sum_{T\in \mathcal{T}_h}
\alpha_{T}\sum_{j=1}^{d+1}\vek{v}^2_{j,T}|\nabla\lambda_{j,T}|^2
\le \sum_{T\in \mathcal{T}_h} c_Th_T^{-2}\alpha_T\sum_{j=1}^{d+1}\vek{v}^2_{j,T}|\lambda_{j,T}|^2\\
&\le& \sum_{T\in \mathcal{T}_h} h_T^{-2}\alpha_T
c_Tc_{M,T}\|v\|_{L^2(T)}^2.
\end{eqnarray*}
In the inequalities above, we have used standard inverse inequality,
and also that the local mass matrix for an element $T$ is equivalent
to its diagonal with a bound $c_{M,T}$ independent of the coefficient
variation. Finally, 
\begin{equation}\label{eqn:mass}
\|\vek{v}\|_D^2 \le c_Mh^{-2}\|v\|^2_{0,\alpha}, \quad\mbox{with}\quad c_M=\max_{T\in\mathcal{T}_h} c_Tc_{M,T}. 
\end{equation}
It is also clear that $\|\vek{v}\|_A = \|v\|_a$ by the definition of
the stiffness matrix.  

We now formulate the spectral equivalence result for the two-level
method when applied to the discretized
PDE~\eqref{eqn:model-problem-with-jumps}.
\begin{theorem}\label{theorem: contrast independent TG convergence}
  Let $\mymu>1$, and $m$ be such that~\eqref{Pos_Cond_mu} holds with
  $\omega=1$. Assume that $V_H$ is such that the approximation and
  stability assumption holds. Then we have the following spectral
  equivalence (for $n_z$ see \eqref{n_z is nnz per row of A}):
\begin{equation}
\vek{v}^TA\vek{v} \le \vek{v}^TB^{-1}\vek{v} \le K_{TG}\;
\vek{v}^TA\vek{v},\quad K_{TG}=1+ 4c_{as}\left[\frac{c_Mn_z}{(\mymu+1)} \left(\frac{H}{h}\right)^2 + 1\right]. 
\end{equation}
Moreover, if $\kappa$ (or equivalently the degree of the polynomial
$m$) is sufficiently large the spectral equivalence is uniform with
respect to mesh parameters and coefficient variation. 
\end{theorem}
\begin{proof} The proof is the same as the one given in \cite{2011BrezinaM_VassilevskiP-aa} however with a different 
smoothing property provided by Theorem \ref{theorem: polynomial smoothing property}. 

The lower bound is immediate, since $E_{\tl}$ is a contraction in
$A$-norm. The upper bound follows directly from
Corollary~\ref{coro:abstract-estimate} together used in conjunction
with the simple inequalities relating the function space $V_h$ and
$\vek{V}$ (see~\eqref{eqn:mass}). Given $v\in V_h$, let $\vek{v}\in
\vek{V}$ be the corresponding vector of degrees of freedom. We have
\begin{eqnarray*}
\vek{v}^T B^{-1} \vek{v} &\le&  
4\inf_{\vek{v_H}\in \vek{V_H}}
\left[\|\vek{v}_H\|_A^2 +
\frac{\alambda}{(\mymu+1)}
\|\vek{v}-\vek{v}_H\|_{D}^2 + \|\vek{v}-\vek{v}_H\|^2_{A}\right]\\
& \le &
4\inf_{\vek{v_H}\in \vek{V_H}}\left[\|\vek{v}_H\|_A^2 +
\frac{c_Mn_zh^{-2}}{(\mymu+1)} \|v-v_H\|_{0,\alpha}^2 + \|v-v_H\|^2_{a}\right]\\
& \le &
\left[ 1+ \frac{4c_{as}c_Mn_z}{(\mymu+1)} \left(\frac{H}{h}\right)^2 + 4c_{as}\right]\|v\|^2_{a}
= K_{TG}\; \vek{v}^T A \vek{v}.
\end{eqnarray*}
Clearly, for $(\sqrt{\mymu}+1)\ge \frac{H}{h}$, and 
$m$ satisfying~\eqref{degree}, for example, $m\ge \frac{H}{h} \ln (H/h)$,
the spectral
equivalence is uniform with respect to mesh size and coefficient
variation.  
\end{proof}


\section{Choice of coarse spaces and numerical tests}
\label{section:numerical experiments}
In this section, we present a number of tests that illustrate the
robustness of the two--level methods with the polynomial smoother
analyzed in the present paper all in accordance with Theorem
\ref{theorem: contrast independent TG convergence}.  We consider the
second order elliptic equation~\eqref{eqn:model-problem-with-jumps}
with a mixture of Neumann and Dirichlet boundary conditions. The
Dirichlet boundary conditions are imposed on the ``east'' and ``west''
vertical boundaries, i.e.  $\Gamma_D=\GEa \cup \GWe$ of $\Omega$. As
we pointed out, the coefficient $\alpha(\bx)$ is piecewise constant
and we assume that the fine triangulation of $\Omega$ is aligned with
(resolves) all the coefficient discontinuities. In
Fig.~\ref{fig:coeff_distrib} we show an example of a fine grid
$\mathcal{T}_h$, aligned with discontinuities.
\begin{figure}[!htb]
\centering
\includegraphics[width=0.45\textwidth]{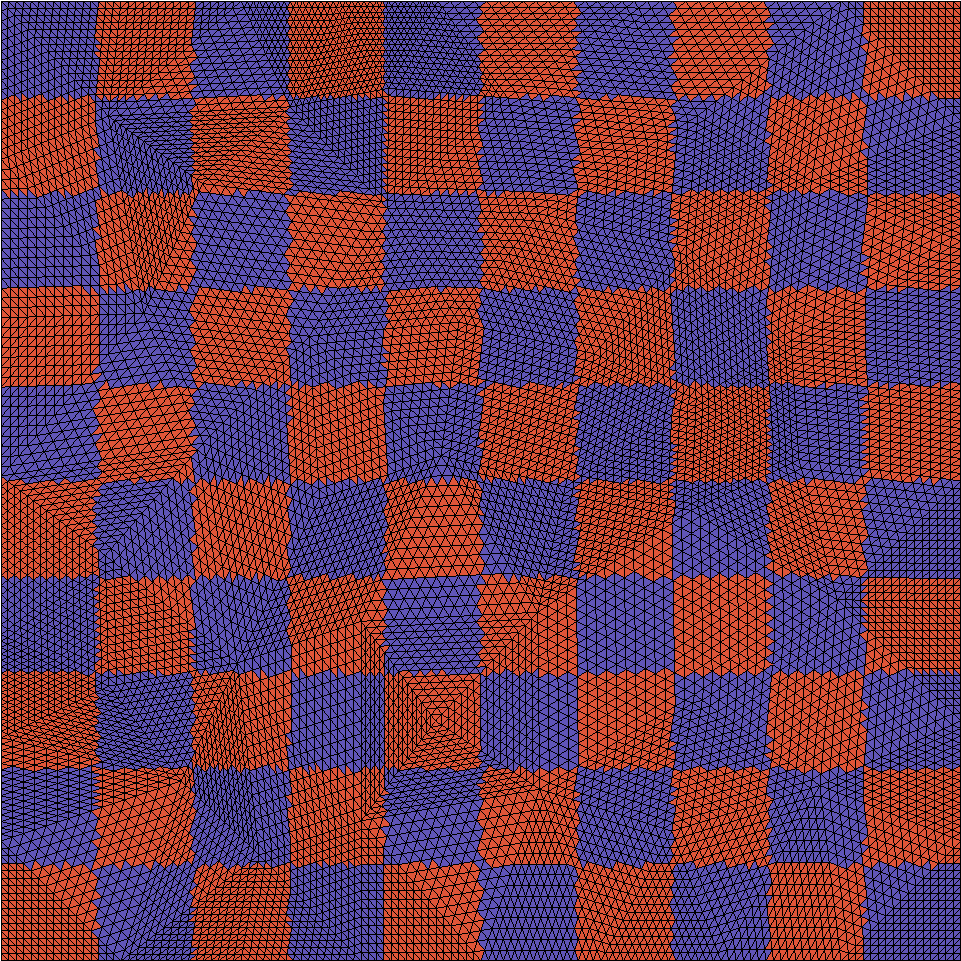}
\caption{Checkerboard coefficient distribution on a mesh with $25600$ elements and $13041$ vertices.}
\label{fig:coeff_distrib}
\end{figure}
\subsection{Coarse spaces}
We use element agglomeration to define ``coarse elements'' as
illustrated in Fig.~\ref{fig:AEs}.  and a variant of the spectral AMGe
method (see, e.g. \cite{Panayotsbook}) in the form presented in
\cite{2011BrezinaM_VassilevskiP-aa}.
\begin{figure}[!htb]
  \centering \subfloat[]
{\includegraphics[width=0.4\textwidth]{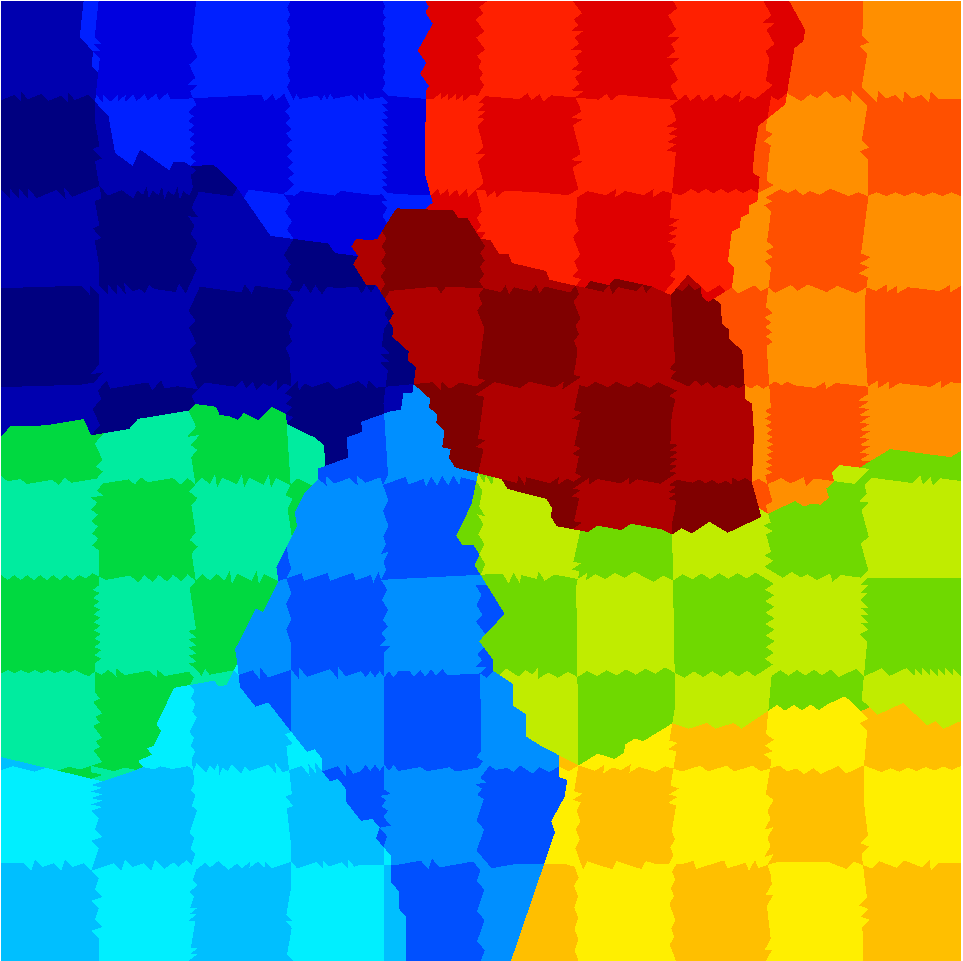}}\hspace*{0.1\textwidth}
  \subfloat[]
{\includegraphics[width=0.4\textwidth]{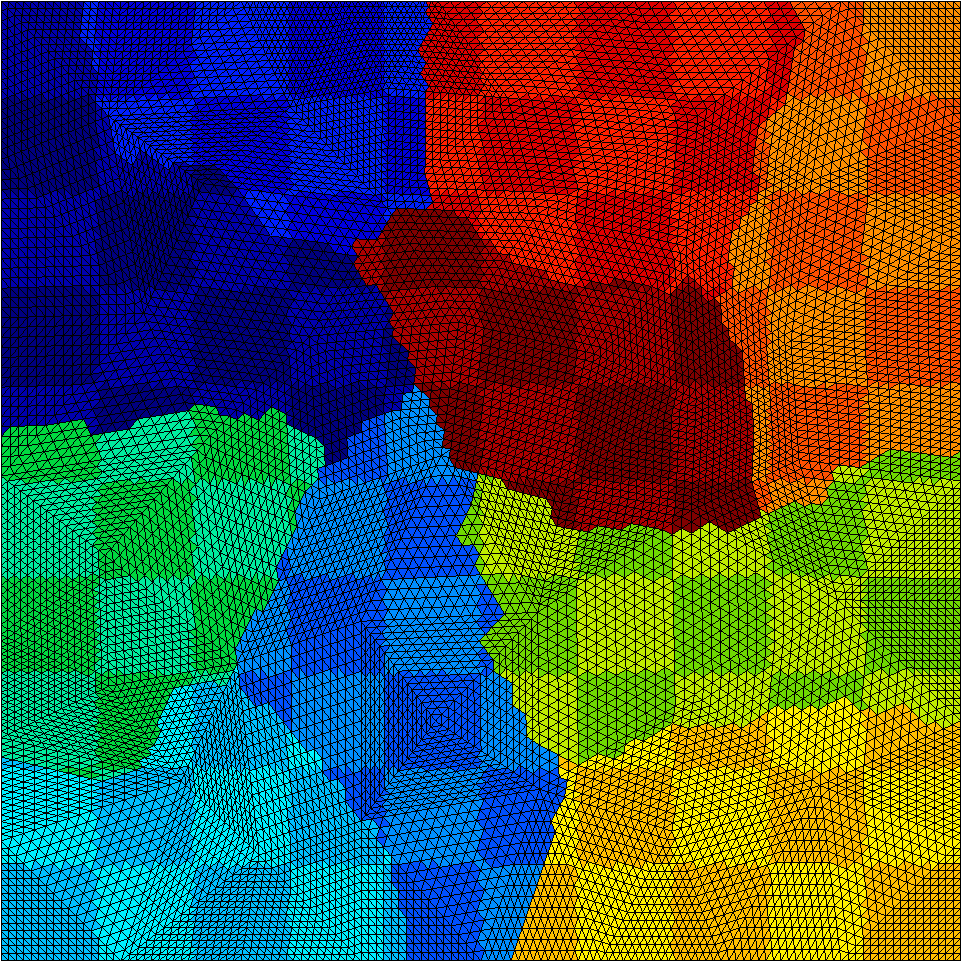}}
\caption{(a) An example of $10$ aggregates; (b) the corresponding
  element agglomerates (unions of fine grid elements) constructed on a
  mesh with $6400$ elements and $3321$ vertices. 
The distribution of discontinuity of the PDE coefficient is not resolved by the 
agglomerates.}
\label{fig:AEs}
\end{figure}
Briefly the main steps in such  coarse space construction are:
\begin{itemize}
\item Partitioning of the degrees of freedom as a union of
  non-overlapping sets, $\{\A\}$ called \emph{aggregates}.  This is achieved by first partitioning the set of elements into agglomerated elements $\{\tau\}$ (union of fine-grid elements). 
We use graph partitioner (metis) applied to the graph having 
vertices the fine-grid elements with edges between two elements if they share a common interface. 
Then, we form aggregates $\A$, where each aggregate (a set of fine degrees of freedom) 
corresponds a unique agglomerated element
  $\tau = \tau_{\A}$ by distributing the shared fine degrees of freedom (fine-grid 
element vertices belonging to two or more 
agglomerated elements) to a unique aggregate. 

\item Constructing a tentative interpolation matrix ${\overline P}$,
  defined for an agglomerate $\tau$.  Consider the local
  generalized eigenproblem,
$$A_{\tau} \bm{\varphi}_k  = \theta_k D_{\tau} \bm{\varphi}_k,$$ 
where $A_{\tau}$ is the local stiffness matrix corresponding to
the agglomerated element $\tau$, $D_{\tau}$ is its diagonal and 
$\theta_1\le \theta_2\le\ldots \le\theta_{n_{\A}}$ with $n_{\A}=|\A|$ (cardinality of
$\A$). Given a \emph{spectral tolerance}
$\theta$, we select the eigenvectors
$\{\varphi_k\}_{k=1}^{n_{\theta}}$, where $n_{\theta}$ is the largest
integer for which the inequality $\theta_{n_\theta} < \theta$ holds.
Extended by zero outside each $\A$, the vectors 
$\{\bm{\varphi_k}\}_{k=1}^{n_{\theta}}$ form $n_\theta$ columns of
the global tentative interpolation operator ${\overline P}$.
\item 
Constructing the coarse space as the range of the interpolation matrix
$P$, which is defined as 
\begin{equation*}
P = s_m\left (\lambda^{-1} D^{-1}A\right ) {\overline P}.
\end{equation*}
Here, as in the previous section,  
$D$ is the diagonal of $A$,  $\lambda \ge \|D^{-\frac{1}{2}} A
D^{-\frac{1}{2}}\|$ (e.g. $\lambda=\|D^{-1/2}AD^{-1/2}\|_{\infty}$), 
and  $s_{m}(t)$ is the
smoothed aggregation (SA) polynomial (cf., e.g., \cite{BVV11})
\begin{equation*}
s_m(t) = \frac{(-1)^m}{(2m+1)}\;\frac {T_{2m+1}(\sqrt{t})}{\sqrt{t}}.
\end{equation*}
\end{itemize}

\subsection{Numerical tests}
We recall some of the notations and definitions which are used in the
tables and figures in this section. 
\begin{itemize}
\item $N$ is the number of fine grid degrees of freedom; 
\item $N_H$ is the number of coarse degrees of freedom; 
\item $nnz(X)$ is the number of the nonzero elements in a matrix $X$;
\item ${\widetilde {\varrho}}_{TG}$ is the asymptotic convergence
  factor of the two grid method;
\item $oc(B)$ is the \emph{operator complexity} measure of the two-grid preconditioner $B$,
  defined as $oc(B)=\frac{nnz(A)+nnz(A_H)}{nnz(A)}$.
\end{itemize}

The first set of experiments are on a mesh with $102,400$ elements and
$N=51,681$ vertices using $300$ agglomerated elements (AEs).  We stop
the iterations when the relative preconditioned residual norm is
reduced by a factor of $\varepsilon = 10^{-8}$.  The piecewise
constant coefficient $\alpha(\bx)$ is distributed in a checkerboard
fashion with values $1$ and $10^6$ as illustrated in
Fig. \ref{fig:coeff_distrib}.

The experiments are performed for $m = 2, 4, 6, 8$, $a\equiv
\frac{1}{\kappa}= 0.158, 0.085, 0.055, 0.04$, and $a = 0.2, 0.1, 0.08,
0.06$ respectively. They are chosen such that the
inequality~\eqref{Pos_Cond_mu} (with $\omega=1$) holds:
\begin{equation*}
\left (\frac{\sqrt{\mymu}-1}{\sqrt{\mymu}+1} \right )^m =
\left (\frac{1 - \sqrt{a}}{1+\sqrt{a}} \right )^m < \frac{a}{1-a} = \frac{1}{\kappa-1}, 
\quad a = \frac{1}{\kappa}.
\end{equation*}
The same degree $m$ is used for the polynomial smoother in the
two-level algorithm and the smoother of the tentative interpolation
matrix (it is smoothed out by $s_m \lp \inv{D}A \rp$).  The number of
non-zero entries of $A$, is $nnz(A)=359,841$.  We also show how the
spectral tolerance $\theta$ and the polynomial degree $m$ influences
the convergence versus \emph{operator complexity}. The results are
presented in~Tables~\ref{table:m=2}--\ref{table:m=8}.  It is evident
from the results that the method can become fairly fast (in terms of
convergence factors) at the expense of large operator complexity.

\begin{table}[!htb]
\centering
\begin{tabular}{|l|r|r|r|r|r|}
\cline{5-6}
\multicolumn{1}{c}{} & \multicolumn{1}{c}{} & 
\multicolumn{1}{c}{} & \multicolumn{1}{c}{} &
\multicolumn{2}{|c|}{$\trhotg$}\\ \hline
\multicolumn{1}{|c}{$\theta$}   & \multicolumn{1}{|c}{$N_H$} & 
\multicolumn{1}{|c}{$nnz(A_H)$} & \multicolumn{1}{|c}{$oc(B)$} &
\multicolumn{1}{|c}{{\small $a=0.158$}} & \multicolumn{1}{|c|}{{\small $a=0.2$}}\\
\hline
  0.010   & 774 & 15,930        & 1.04 & 0.995 & 0.995 \\
\hline
0.077 & 3,629 & 342,515 & 1.95 & 0.879 & 0.889 \\
\hline
0.149 & 6,557 & 1,115,207 & 4.10 & 0.393 & 0.492 \\
\hline
\end{tabular}
\caption{Two-grid convergence, $m = 2$. \label{table:m=2}}
\end{table}

\begin{table}[!htb]
\centering
\begin{tabular}{|l|r|r|r|r|r|}
\cline{5-6}
\multicolumn{1}{c}{} & \multicolumn{1}{c}{} & 
\multicolumn{1}{c}{} & \multicolumn{1}{c}{} &
\multicolumn{2}{|c|}{$\trhotg$}\\ \hline
\multicolumn{1}{|c}{$\theta$}   & \multicolumn{1}{|c}{$N_H$} & 
\multicolumn{1}{|c}{$nnz(A_H)$} & \multicolumn{1}{|c}{$oc(B)$} &
\multicolumn{1}{|c}{{\small $a=0.085$}} & \multicolumn{1}{|c|}{{\small $a=0.1$}}\\
\hline
0.010 & 774 & 22,092 & 1.06 & 0.985 & 0.986 \\
\hline
0.077 & 3,629 & 472,907 & 2.31 & 0.531 & 0.538 \\
\hline
0.149 & 6,557 & 1,538,845 & 5.28 & 0.188 & 0.084 \\
\hline
\end{tabular}
\caption{Two-grid convergence when $m = 4$.\label{table:m=4}}
\end{table}

\begin{table}[!htb]
\centering
\begin{tabular}{|l|r|r|r|r|r|}
\cline{5-6}
\multicolumn{1}{c}{} & \multicolumn{1}{c}{} & 
\multicolumn{1}{c}{} & \multicolumn{1}{c}{} &
\multicolumn{2}{|c|}{$\trhotg$}\\ \hline
\multicolumn{1}{|c}{$\theta$}   & \multicolumn{1}{|c}{$N_H$} & 
\multicolumn{1}{|c}{$nnz(A_H)$} & \multicolumn{1}{|c}{$oc(B)$} &
\multicolumn{1}{|c}{{\small $a=0.055$}} & \multicolumn{1}{|c|}{{\small
    $a=0.08$}}\\ 
\hline
0.010 & 774 & 29,448 & 1.08 & 0.965 & 0.969 \\
\hline
0.077 & 3,629 & 636,671 & 2.78 & 0.205 & 0.179 \\
\hline
0.149 & 6,557 & 2,074,291 & 6.76 & 0.202 & 0.026\\
\hline
\end{tabular}
\caption{Two-grid convergence when $m = 6$. \label{table:m=6}}
\end{table}

\begin{table}[!htb]
\centering
\begin{tabular}{|l|r|r|r|r|r|}
\cline{5-6}
\multicolumn{1}{c}{} & \multicolumn{1}{c}{} & 
\multicolumn{1}{c}{} & \multicolumn{1}{c}{} &
\multicolumn{2}{|c|}{$\trhotg$}\\ \hline
\multicolumn{1}{|c}{$\theta$}   & \multicolumn{1}{|c}{$N_H$} & 
\multicolumn{1}{|c}{$nnz(A_H)$} & \multicolumn{1}{|c}{$oc(B)$} &
\multicolumn{1}{|c}{{\small $a=0.04$}} & \multicolumn{1}{|c|}{{\small
    $a=0.06$}}\\ 
\hline
0.010 & 774 & 37,618 & 1.10 & 0.926 & 0.933 \\
\hline
0.077 & 3,629 & 808,357 & 3.25 & 0.197 & 0.111 \\
\hline
0.149 & 6,557 & 2,632,755 & 8.32 & 0.193 & 0.028 \\
\hline
\end{tabular}
\caption{Two-grid convergence when $m = 8$. \label{table:m=8}}
\end{table}

In the last experiment shown in Table~\ref{table:table9}, we illustrate the behavior of the
method with respect to varying the contrast $10^c$ again distributed
in a checkerboard fashion. As it is clearly seen, the two-grid method
exhibits very good uniform two-grid convergence with operator complexity
less than two.


\begin{table}[!htb]
\centering
\begin{tabular}{@{\extracolsep{-3pt}}|c|c|c|c|c|c|c|c|c|c|}
\hline
$c$ & -12 & -9 & -6 & -3 & 0 & 3 & 6 & 9 & 12 \\
\hline
$N_H$ & 2336 & 2336 & 2336 & 2339 & 2322 & 2322 & 2322 & 2322 & 2322\\
$oc(B)$ & 
1.94 & 1.94 & 1.94 & 1.94 & 1.93 & 1.93 & 1.93 & 1.93 & 1.93\\
\hline
$n_{it}$ & 17 & 17 & 17 & 17 & 17 & 16 & 16 & 16 & 16 \\
\hline
$\trhotg$ & 0.219 & 0.219 & 0.219 & 0.219 & 0.219 & 0.200 & 0.198 & 0.197 & 0.198 \\ 
\hline
\end{tabular}
\caption{Contrast independent two-grid convergence; coefficient jumps
are $10^c$. The method corresponds to spectral threshold  $\theta =
0.045$, $m = 8$, and $a = 0.04$.\label{table:table9}}
\end{table}

\section*{Acknowledgments}
The authors thank Delyan Kalchev from Sofia University ``St. Kliment
Ohridski'' for his help with the numerical experiments.

\appendix

\section{}
The proof of Theorem~\ref{thm:polynomial} presented in this section is
based on an equivalent result given
in~\cite[p.~33,~Equation~(4.25)]{1967MeinardusG-aa}.  Let us also remark that in
this section our considerations are on the interval $[-1,1]$ and in
addition, by \emph{best polynomial approximation} we mean the best
polynomial approximation in the norm $\|\cdot\|_{\infty}$ on $[-1,1]$.

\subsection{An approximation result equivalent to Theorem~\ref {thm:polynomial}\label{sect:gm-thm}} 

We now formulate the result in~\cite{1967MeinardusG-aa} in the notation
introduced earlier and show how the result in
Theorem~\ref{thm:polynomial} can be derived from
\cite[p.~33,~Equation~(4.25)]{1967MeinardusG-aa}.

\begin{theorem}[G. Meinardus,~\cite{1967MeinardusG-aa}]\label{thm:gm-polynomial}
  The polynomial $\widetilde{Q}_m\in \mathcal{P}_{m}$, of degree less
  than or equal to $m$, which furnishes the best approximation to
  $\frac{1}{t-a}$, $a> 1$ on $[-1,1]$ is given by:
\begin{equation*}
\widetilde{Q}_m(t) = \frac{1}{t-a}\left(1 - 
\frac{(a - \sqrt{a^2-1})^{m}}{a^2-1}
\widetilde{R}_{m+1}(t)\right),
\end{equation*}
where
\begin{equation*}
\widetilde{R}_{m+1}(t) = 
\left[(a t-1)T_{m}(t)+\frac{\sqrt{a^2-1}}{m}(t^2-1)
T_{m}^\prime(t) \right].
\end{equation*}
\end{theorem}

The result we have just stated is for the best approximation to the
function $\frac{1}{t-a}$, while to prove Theorem~\ref{thm:polynomial}
we need such result for $\frac{1}{t+a}$.  It is however easy to show
that Theorem~\ref{thm:gm-polynomial} also provides the best
polynomial approximation to $\frac{1}{t+a}$.  Indeed,
note that for any polynomial $p(t)$ of degree less than or equal to
$m$, and for all $t\in [-1,1]$, there holds
\begin{equation*}
p(-t) - \frac{1}{(-t)-a} = -\left(-p(-t) - \frac{1}{t+a}\right).
\end{equation*}
Further, for a function $g(t)$ continuous on $[-1,1]$ we also have,
\begin{equation*}
\max \{g(t)~|~t\in[-1,1]\} = 
\max \{g(-t)~|~t\in[-1,1]\},
\end{equation*}
These two identities give that
\begin{equation*}
\left\|p - \frac{1}{t-a}\right\|_{\infty} =
\max_{t\in[-1,1]}
\left|p(-t) - \frac{1}{(-t)-a}\right| = 
\max_{t\in[-1,1]}\left|(-p(-t)) - \frac{1}{t+a}\right|.
\end{equation*}
Since $p$ was an arbitrary polynomial of degree less than or equal to
$m$, we may take the infimum over all $p\in \mathcal{P}_m$. According
to Theorem~\ref{thm:gm-polynomial} the left side is minimized for
$p(t)=\widetilde{Q}_m(t)$. Therefore the right side should also be
minimized for $p(t)=\widetilde{Q}_m(t)$. More precisely, we have
\begin{equation}\label{eqn:show-it}
\left\|(-\widetilde{Q}_m(-t)) - \frac{1}{t+a}\right\|_{\infty} =
\inf_{p\in \mathcal{P}_m}\max_{t\in[-1,1]}\left|(-p(-t)) - \frac{1}{t+a}\right|=
\inf_{q\in \mathcal{P}_m}\left\|q - \frac{1}{t+a}\right\|_{\infty},
\end{equation}
which shows that the best polynomial approximation to
$\frac{1}{t+a}$ on $[-1,1]$ is $(-\widetilde{Q}_m(-t))$.

\subsection{Proof of Theorem \ref {thm:polynomial}\label{sect:thmproof}} 
We need to show that
for the polynomial $Q_m(t)$ defined as in~\eqref{eqn:bpa1x} we have 
$Q_m(t)=(-\widetilde{Q}_m(-t))$. 
We use properties of Chebyshev polynomials to
prove this identity. 
If we set  $\alpha=\arccos t$ we have
\begin{eqnarray*}
(t^2-1) T^{\prime}_m(-t) &=&  (-1)^{m}m\sin\alpha\sin m\alpha
=\frac{(-1)^{m-1}m}{2}(\cos(m+1)\alpha-\cos(m-1)\alpha)\\
& = &\frac{(-1)^{m-1}m}{2}(T_{m+1}(t)-T_{m-1}(t)).
\end{eqnarray*}
Recall that $T_k(-t)=(-1)^kT_k(t)$, $\delta = (a-\sqrt{a^2-1})$,
and $2t T_m(t)
=(T_{m+1}(t)+T_{m-1}(t))$. Therefore, we have
\begin{eqnarray*}
\widetilde{R}_{m+1}(-t) &=& 
-(a t+1)T_{m}(-t)+\frac{\sqrt{a^2-1}}{m}(t^2-1)
T_{m}^\prime(-t) \\
&=& 
(-1)^{m+1}(a t+1)T_{m}(t)+\frac{(-1)^{m-1}\sqrt{a^2-1}}{2}(T_{m+1}(t)-T_{m-1}(t))\\
&=& \frac{(-1)^{m+1}}{2}
\left[(a (T_{m+1}(t)+T_{m-1}(t))
+2T_{m}(t)+\sqrt{a^2-1} (T_{m+1}(t)-T_{m-1}(t)) \right]\\
&=& \frac{(-1)^{m+1}}{2}
\left[\delta^{-1} T_{m+1}(t)+2 T_{m}(t)+\delta T_{m-1}(t) \right]\\
&=& \frac{(-1)^{m}}{2}
\left[\eta^{-1} T_{m+1}(t)-2 T_{m}(t)+\eta T_{m-1}(t) \right].
\end{eqnarray*}
Looking at the definition of $R_{m+1}(t)$, given in Theorem~\ref{thm:polynomial} (relation \eqref{eqn:rmplus1})
it is easily seen that  $\widetilde{R}_{m+1}(-t) =
\frac{(-1)^m}{2}R_{m+1}(t)$. Since $(\eta^{-1}-\eta)^2=4(a^2-1)$,
we finally get
\begin{eqnarray*}
(-\widetilde{Q}_m(-t)) &=& 
-\frac{1}{-t-a}\left(1 -  \frac{4\delta^{m}}{(\delta+\delta^{-1})^2}
\widetilde{R}_{m+1}(-t)\right)= \frac{1}{t+a}\left(1 -  \frac{2(-1)^m\delta^{m}}{(\delta^{-1}-\delta)^2}
R_{m+1}(t)\right) \\
& = &  \frac{1}{t+a}\left(1 -  \frac{2\eta^{m}}{(\eta-\eta^{-1})^2}
R_{m+1}(t)\right) = Q_{m+1}(t).
\end{eqnarray*}
Thus, $Q_m(t)$ and $(-\widetilde{Q}_m(-t))$ coincide and the proof is complete.\hfill $\Box$
 \begin{remark}
   It is also possible to prove directly that the polynomial
   in~\eqref{eqn:bpa1x} is a polynomial of best approximation to
   $x^{-1}$ by specifying the points of Chebyshev alternance. Such
   proof is however much more elaborate than the one presented here.
 \end{remark}

\end{document}